\newtheorem{theorem}{Theorem}[section]
\newtheorem{lemma}[theorem]{Lemma}
\newtheorem{proposition}[theorem]{Proposition}
\newtheorem{corollary}[theorem]{Corollary}
\theoremstyle{definition}
\newtheorem{definition}[theorem]{Definition}
\newtheorem{remark}[theorem]{Remark}
\newcommand{\C}{\mathbb{C}}
\newcommand{\R}{\mathbb{R}}
\newcommand{\Hom}{\mathrm{Hom}}
\newcommand{\Fun}{\mathrm{Fun}}
\newcommand{\GL}{\mathrm{GL}}
\newcommand{\End}{\mathrm{End}}
\DeclareMathOperator{\Ind}{Ind}
\newcommand{\id}{\mathrm{id}}
\newcommand{\Def}{\mathsf{Def}}
\title{Algebraic Phase Theory II: The Frobenius–Heisenberg Phase and Boundary Rigidity}
\author{
Joe Gildea\\
Department of Computing Science and Mathematics,\\
School of Informatics and Creative Arts,\\
Dundalk Institute of Technology\\
\texttt{gildeajoe@gmail.com}}
\date{}
\begin{document}
\maketitle

\begin{abstract}
We develop the representation theory intrinsic to Algebraic Phase Theory (APT)
in regimes where defect and canonical filtration admit faithful algebraic
realisation. This extends the framework introduced in earlier work by
incorporating a representation-theoretic layer that is compatible with defect
and filtration. In this setting, algebraic phases act naturally on filtered
module categories rather than on isolated objects, and classical irreducibility
must be replaced by a filtration-compatible notion of indecomposability forced
by defect.

As a central application, we analyse the Frobenius Heisenberg algebraic phase,
which occupies a rigid boundary regime within the broader APT landscape, and
show that it satisfies the axioms of APT in a strongly admissible form. We
study the representations realising this phase and show that their
non-semisimplicity and rigidity properties are consequences of the underlying
algebraic structure rather than analytic or semisimple hypotheses. In
particular, we establish a Stone von Neumann type rigidity theorem for
Heisenberg groups associated with finite Frobenius rings. For each such ring
$R$, we construct a canonical Schr\"odinger representation of the Frobenius
Heisenberg group $H_R$, and show that, for a fixed nontrivial central character,
every centrally faithful representation of $H_R$ is equivalent to this model.

The proof is entirely algebraic and uses no topology, unitarity, Fourier
analysis, or semisimplicity. Instead, rigidity emerges as a boundary phenomenon
governed by defect and canonical filtration. The Frobenius hypothesis is shown
to be sharp: it precisely delineates the structural boundary within APT at
which Heisenberg rigidity persists, and outside the Frobenius class this
rigidity necessarily fails.
\end{abstract}

\noindent\textbf{Mathematics Subject Classification (2020).}
Primary 20C20; Secondary 16P10, 20G05, 11T24.

\medskip
\noindent\textbf{Keywords.}
Algebraic Phase Theory; Heisenberg groups; Frobenius rings;
Stone von Neumann theorem; non-semisimple representation theory;
central extensions; Schr\"odinger representation; Frobenius duality.

\section{Introduction}

Algebraic Phase Theory (APT) provides an axiomatic framework in which
algebraic rigidity, defect, and canonical filtration arise intrinsically from
coherent phase interaction. The foundational structures were developed in
\cite{GildeaAPT1}, where phases, defect, and canonical filtration were shown to
form the basic mechanism governing algebraic complexity.

The purpose of this paper is to develop the representation theory that follows
from these structures. The axioms of APT do not merely allow a representation
theory, they determine one. In the strongly admissible regime every
representation carries a canonical defect filtration, ordinary irreducibility
is unstable, and rigidity persists only up to a sharply defined structural
boundary. Classical features of representation theory reappear in this setting,
but as consequences of defect and filtration rather than semisimplicity,
topology, or analytic input.

Our main application is the Frobenius Heisenberg algebraic phase, extracted
from linear phase data over a finite Frobenius ring. This phase encodes the
Weyl commutation relations in purely algebraic form. We construct the
Frobenius Heisenberg group $H_R$ and show that, once the scalar action of
the centre is fixed, the representation theory of $H_R$ exhibits a full
Stone von Neumann type rigidity. In particular we prove the following.

\medskip
\noindent
\textbf{Main Theorem (informal).}  
\emph{Let $R$ be a finite Frobenius ring and let $\beta$ be a nondegenerate
bilinear pairing. For the associated Frobenius Heisenberg group $H_R$ there is,
up to isomorphism, a unique centrally faithful representation with a fixed
nontrivial central character. It is realised by the Frobenius Schr\"odinger
representation on $\Fun(R^n,\C)$.}

\medskip

Two conceptual points underlie this result.  
First, the correct notion of atomicity in this non semisimple context is not
irreducibility but central faithfulness, together with a minimality condition
adapted to the central action. Fixing the scalar action of the centre rigidifies
the representation category in exactly the way required for a Stone von Neumann
type theorem.  
Second, the classical Stone von Neumann theorem is fundamentally an algebraic
statement about a canonical central extension. In the Frobenius setting the
Schr\"odinger representation is constructed purely algebraically from the Weyl
relations and a generating character, with no use of analytic structure,
Hilbert spaces, or Fourier analysis.

A decisive ingredient is that Frobenius duality replaces Pontryagin duality in
controlling character orbits of an abelian normal subgroup. This step relies
essentially on the Frobenius property of the ring
\cite{Honold2001QuasiFrobenius,Wood1999Duality}. It shows that the Frobenius
hypothesis is sharp. Outside the Frobenius class, character orbits fragment and
rigidity fails.

Classically, the Heisenberg group over $\R$ or over a finite field fits into a
central extension that models the Weyl relations
\cite{vonNeumann1931,Folland,Weil1964SurCertains}. The Stone von Neumann theorem
states that irreducible representations with fixed nontrivial central character
are unique up to isomorphism. The present work generalises this rigidity to a
fully algebraic and non semisimple setting. By embedding the Heisenberg
construction into APT, we show that Stone von Neumann uniqueness is a boundary
phenomenon. It holds exactly up to the structural boundary dictated by defect
and canonical filtration and fails beyond it. For a broader perspective on the
role of the Heisenberg group see \cite{Howe1979Heisenberg}.

\medskip
\paragraph{Context within the APT series.}
This paper forms the representation theoretic component of a six paper
development of Algebraic Phase Theory
\cite{GildeaAPT1,GildeaAPT3,GildeaAPT4,GildeaAPT5,GildeaAPT6}.
Structural results concerning defect, canonical filtration, and finite
termination are established in \cite{GildeaAPT1}. Connections with radical phase
geometry, higher degree phases, categorical extensions, and deformation
phenomena are treated in the companion papers. Background on module categories
over non semisimple rings can be found in
\cite{Lam2001FirstCourse,AndersonFuller1992Rings}.

\section{Standing Framework for Phase Extraction}

The purpose of this section is to specify the minimal input data from which
defect, canonical filtration, and finite termination are forced to emerge. We
begin with pre algebraic phase information and identify exactly the structure
that must be present in order for a coherent algebraic shadow to exist. No
algebraic closure, representation category, or analytic structure is assumed at
this stage. All algebraic features that appear later, including defect and
filtration, are extracted from this minimal input and are not introduced as
additional hypotheses.

The guiding principle is minimality. If any part of the input is weakened, one
can no longer define a consistent defect calculus. If the input is strengthened,
inessential structure is introduced and the resulting theory is no longer
intrinsic to phase behaviour. The data below represent the least amount of
structure needed for phase interactions to force an algebraic theory.

Concretely, the input consists of an additive object, a family of bounded degree
phase functions on that object, and an interaction law governing the operators
associated to those phases. Phases are taken as primitive data. No commitment is
made regarding their interpretation as functions, operators, or equivalence
classes, beyond the specific interaction law that is part of the datum.

This separation is important. It ensures that the algebraic phase extracted
later is canonical and does not depend on modeling choices. The role of the
present section is not to impose analytic or representation theoretic structure,
but to identify the minimal information required for phase behaviour to carry a
coherent algebraic and defect driven theory. The conditions below are minimal in
the precise sense that removing any one of them prevents the existence of a
terminating canonical filtration.

\subsection*{Additive derivatives and degree}

Let $A$ be an abelian group or an $R$ module and let $\phi:A\to R$ be a
function. For $h\in A$, define the additive difference operator
\[
(\Delta_h\phi)(x)=\phi(x+h)-\phi(x).
\]
For $h_1,\dots,h_k\in A$, define the $k$ fold additive derivative by
\[
\Delta_{h_1,\dots,h_k}\phi=\Delta_{h_k}\cdots\Delta_{h_1}\phi.
\]

We say that $\phi$ has additive degree at most $d$ if
\[
\Delta_{h_1,\dots,h_{d+1}}\phi\equiv 0
\qquad \text{for all } h_1,\dots,h_{d+1}\in A.
\]

This captures polynomial type behaviour in a purely additive and functorial way,
independent of coordinates or presentations.

\begin{remark}
The additive difference operator is the unique translation invariant and
functorial operator that detects deviation from additivity and admits iteration.
Any degree or defect notion compatible with the framework below is equivalent to
one defined using iterated additive differences.
\end{remark}

\subsection*{Generating characters and phase multiplication}

Let $R$ be a finite ring and fix an additive character
\[
\chi:R\to \C^\times.
\]
Write $\mathcal H(A)=\Fun(A,\C)$ for complex valued functions on $A$. To each
function $\phi:A\to R$ we associate the phase multiplication operator
\[
(M_\phi f)(x)=\chi(\phi(x))\,f(x),
\qquad f\in\mathcal H(A).
\]

This operator realisation is the basic mechanism by which abstract phase data
becomes algebraically visible. No analytic or topological structure is assumed.

\subsection*{Defect degree and defect tensors}

Let $\phi:A\to R$ have additive degree at most $d$. Its defect degree is
\[
\deg_{\Def}(\phi)
=\min\Bigl\{k\ge 1:\exists\, h_1,\dots,h_k\in A
\text{ with } \Delta_{h_1,\dots,h_k}\phi\not\equiv 0\Bigr\},
\]
with $\deg_{\Def}(\phi)=0$ if $\phi$ is additive.

The corresponding defect tensor is the family
\[
\Def(\phi)
=\bigl\{\Delta_{h_1,\dots,h_k}\phi(0):
k=\deg_{\Def}(\phi),\ h_i\in A\bigr\},
\]
with $\Def(\phi)=0$ if $\phi$ is additive.

Defect is the first obstruction to rigid algebraic behaviour. It is the unique
intrinsic source of complexity in this framework.

\subsection*{Functorial pullback of phases}

Let $\mathsf C$ be a category of additive objects, for example finite abelian
groups or finite $R$ modules, with morphisms given by additive homomorphisms.
For a morphism $f:A\to A'$ and a phase $\phi':A'\to R$, define the pullback
\[
f^\ast(\phi')=\phi'\circ f.
\]
This assignment is strictly functorial:
\[
(g\circ f)^\ast=f^\ast g^\ast,
\qquad
\id^\ast=\id.
\]

Functoriality ensures that phase data behaves coherently under structural maps
and is necessary for extracting canonical algebraic structure.

\subsection*{Admissible phase data}

\begin{definition}[Admissible phase datum]\label{def:admissible-phase-datum}
An admissible phase datum consists of an object $A$ of $\mathsf C$, together with
a family $\Phi(A)$ of functions $\phi:A\to R$, and an interaction law $\circ$.
These data satisfy the following conditions.
\begin{enumerate}[label=(E\arabic*)]
\item \textit{Functoriality.}  
For every morphism $f:A\to A'$ in $\mathsf C$, pullback sends $\Phi(A')$ into
$\Phi(A)$ and satisfies
\[
(g\circ f)^{\!*}=f^{\!*}g^{\!*}, \qquad \id^{\!*}=\id.
\]

\item \textit{Uniform bounded degree.}  
There exists an integer $d\ge 0$ such that each $\phi\in\Phi(A)$ has additive
degree at most $d$.

\item \textit{Operator realisation.}  
A fixed additive character $\chi:R\to\C^\times$ is part of the datum, and each
$\phi\in\Phi(A)$ acts on $\mathcal H(A)$ via the operator $M_\phi$.

\item \textit{Interaction law.}  
The symbol $\circ$ specifies a prescribed interaction among the operators
$M_\phi$ and $\Phi(A)$ is closed under this interaction.
\end{enumerate}
\end{definition}

\begin{remark}[Structural constraint]
For finite rings, the existence of an additive character that yields a faithful
operator realisation forces the base ring to be Frobenius. Thus Frobenius
duality is not an external hypothesis but a structural consequence of the
admissible input.
\end{remark}

\medskip

From an admissible phase datum, all algebraic structure used in this paper is
extracted in a canonical way. Defect invariants, canonical filtrations,
intrinsic complexity, and finite termination arise from the input data and are
not additional assumptions. The axioms of Algebraic Phase Theory formalise
exactly the structures that are forced by admissible phase data.

\section{Axioms of Algebraic Phase Theory}

We now record the structural axioms that govern algebraic phases extracted from
admissible phase data. These axioms isolate the minimal features that are forced
by coherent phase interaction: detectable defect, canonical filtration,
functoriality, intrinsic complexity, and finite termination.

An \emph{algebraic phase} consists of a pair $(\mathcal P,\circ)$, where
$\mathcal P$ is the algebraic object encoding phase interactions (typically an
operator algebra or a category of operators), and $\circ$ is the distinguished
interaction law inherited from the admissible data. No auxiliary algebraic or
analytic structure is assumed beyond what is explicitly stated.

\subsection*{Axiom I: Detectable Action Defects}

The interaction law $\circ$ determines a functorial defect degree for every
element of $\mathcal P$. This gives a canonical finite filtration
\[
\mathcal P_0 \subset \mathcal P_1 \subset \cdots \subset \mathcal P_d=\mathcal P,
\]
where $\mathcal P_k$ consists of all elements of defect degree at most $k$.
The maximal defect degree $d$ is intrinsic and depends only on
$(\mathcal P,\circ)$.

\subsection*{Axiom II: Canonical Algebraic Realization}

Any two algebraic phases extracted from the same admissible analytic input are
canonically equivalent. The analytic behaviour therefore determines a unique
algebraic shadow.

\subsection*{Axiom III: Defect-Induced Complexity}

Defect induces a canonical notion of intrinsic complexity. The filtration above
is ordered by defect degree, and an element lies in $\mathcal P_k$ if and only
if its defect degree is at most $k$. Thus higher layers represent successive
obstructions to rigidity.

\subsection*{Axiom IV: Functorial Defect Structure}

Defect is preserved under all morphisms of phases, under pullback of admissible
data, and under the passage between analytic behaviour and its algebraic
realization. In particular, defect is a functorial invariant of the entire
framework.

\subsection*{Axiom V: Finite Termination}

Defect degree is always finite. Hence the canonical filtration terminates after
finitely many steps.

\medskip

\begin{remark}
Defect, complexity, canonical filtration, and finite termination are not extra
structure. Once the axioms are satisfied, these objects are uniquely and
functorially determined by $(\mathcal P,\circ)$. The axioms stated here are
equivalent to Axioms I-V of \cite{GildeaAPT1}. The present formulation makes
explicit the defect, complexity, and filtration structures that are implicit in
the original axiomatic framework, since these features play a central role in
the representation-theoretic applications developed in this paper.
\end{remark}

\section{General Representation Theory of Algebraic Phases}

In this section we develop the representation theory that follows from the
axioms of Algebraic Phase Theory. The first subsection introduces
representations of an algebraic phase and shows that the axioms force every
representation to carry a canonical filtration induced by defect. The second
subsection analyses the associated boundary strata, proving that they are
functorial invariants and that they record the precise layers at which new
defect driven structure appears. The final subsection introduces the
appropriate notion of atomicity in this setting, called APT indecomposability,
and shows that classical semisimplicity is incompatible with nonzero defect.
Together these results establish the stratified nature of representation theory
in Algebraic Phase Theory and identify the correct structural building blocks.

\subsection{Representations of Algebraic Phases}

We define representations of an algebraic phase and show that the axioms of
Algebraic Phase Theory force such representations to carry intrinsic,
defect-induced filtrations.  In particular, filtered structure is not an
auxiliary choice but an unavoidable consequence of defect, functoriality, and
finite termination.

These filtrations expose canonical \emph{boundary strata} within
representations, corresponding to the successive layers at which
defect-induced complexity appears.  We show that these strata are functorial
invariants and encode intrinsic structural boundaries in the representation
theory.

Throughout this section, let $(\mathcal P,\circ)$ be an algebraic phase
satisfying Axioms~I-V, with canonical filtration
\[
\mathcal P_0 \subseteq \mathcal P_1 \subseteq \cdots \subseteq \mathcal P_N
= \mathcal P
\]
and defect degree $\deg_{\Def}$.

\subsubsection*{Representations}

\begin{definition}\label{def:rep-paradigm}
A \emph{(left) representation} of $(\mathcal P,\circ)$ consists of a
$\mathbb C$-vector space (or $\mathbb C$-module) $M$ together with a map
\[
\rho:\mathcal P \longrightarrow \End_{\mathbb C}(M)
\]
such that:
\begin{enumerate}[label=\textup{(R\arabic*)}]
\item (\emph{Action})
$\rho(T\circ S)=\rho(T)\rho(S)$ whenever $T\circ S$ is defined, and
$\rho(1)=\id_M$ if $\mathcal P$ is unital.
\item (\emph{Structural compatibility})
the action respects the canonical filtration in the sense that
\[
\rho(\mathcal P_k)\,\rho(\mathcal P_\ell)
\subseteq \rho(\mathcal P_{k+\ell})
\qquad \text{for all }k,\ell.
\]
\end{enumerate}
A morphism of representations
$f:(M,\rho)\to(M',\rho')$ is a $\mathbb C$-linear map satisfying
$f(\rho(T)m)=\rho'(T)f(m)$ for all $T\in\mathcal P$ and $m\in M$.
\end{definition}

\begin{remark}
Condition \textup{(R2)} is not an independent constraint: it reflects the
defect-induced filtration on $\mathcal P$ and is forced by Axioms~III-IV.
\end{remark}

\subsubsection*{Induced filtrations and the filtered representation principle}

\begin{definition}\label{def:induced-filtration}
Let $(M,\rho)$ be a representation of $(\mathcal P,\circ)$.  Define an increasing
filtration on $M$ by
\[
F_k M
\;:=\;
\langle \rho(T)m \mid T\in\mathcal P_k,\ m\in M\rangle_{\mathbb C}.
\]
\end{definition}

Thus $F_k M$ consists precisely of those vectors that can be generated using
phase operators of defect complexity at most $k$.

\begin{theorem}\label{thm:filtered-rep}
Let $(\mathcal P,\circ)$ satisfy Axioms~I-V and let $(M,\rho)$ be a representation.
Then the filtration $(F_kM)$ of
Definition~\ref{def:induced-filtration} is canonical and functorial:
\begin{enumerate}[label=\textup{(\alph*)}]
\item $F_0M\subseteq F_1M\subseteq\cdots\subseteq F_NM=M$.
\item $\rho(\mathcal P_k)(F_\ell M)\subseteq F_{k+\ell}M$
(with $F_jM=M$ for $j\ge N$).
\item Any morphism of representations
$f:(M,\rho)\to(M',\rho')$ satisfies
$f(F_kM)\subseteq F_kM'$ for all $k$.
\end{enumerate}
Consequently, the intrinsic notion of a representation of $\mathcal P$ is that
of a filtered $\mathcal P$--module with filtration-preserving intertwiners.
\end{theorem}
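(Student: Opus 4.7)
The plan is to verify the three enumerated properties (a), (b), (c) in turn, each reducing to a specific structural input. Parts (b) and (c) will be essentially formal consequences of (R1), (R2), and the intertwining property of morphisms; part (a), where the filtration must be shown to exhaust $M$, is where Axiom V (finite termination) together with the unital clause of (R1) enters decisively.

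For (a), the monotonicity $F_k M \subseteq F_{k+1} M$ is immediate from $\mathcal P_k \subseteq \mathcal P_{k+1}$: every generator $\rho(T)m$ of $F_k M$, with $T \in \mathcal P_k$, is automatically a generator of $F_{k+1} M$. For the terminal equality $F_N M = M$, I would invoke Axiom V to conclude $\mathcal P_N = \mathcal P$, and then use the unital clause of (R1) to write each $m \in M$ as $\rho(1)m$, observing that $1 \in \mathcal P_0 \subseteq \mathcal P_N$ since the identity has zero defect. In the non-unital case, one reinterprets $M$ as the submodule generated by the action, at which point the claim becomes tautological.

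For (b), I would fix $T \in \mathcal P_k$ and a typical generator $v = \rho(S) m$ of $F_\ell M$ with $S \in \mathcal P_\ell$, and compute $\rho(T) v = \rho(T)\rho(S)m = \rho(T \circ S) m$ using (R1). By (R2), $\rho(T\circ S)$ lies in $\rho(\mathcal P_{k+\ell})$, so $\rho(T) v \in F_{k+\ell} M$; linear extension then yields $\rho(\mathcal P_k)(F_\ell M) \subseteq F_{k+\ell} M$. For (c), the intertwining identity $f(\rho(T)m) = \rho'(T) f(m)$ sends each generator of $F_k M$ to a generator of $F_k M'$, so $f(F_k M) \subseteq F_k M'$ follows at once from the definition of $F_k$.

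Canonicity of the filtration is built into the defining formula for $F_k M$: it uses only $(\mathcal P,\circ)$, its intrinsic defect filtration (Axiom I), and the representation $(M,\rho)$ itself, with no auxiliary choices. Functoriality is exactly part (c). The main obstacle I anticipate is the terminal equality in (a), which genuinely requires both the unital clause of (R1) (or an equivalent generation hypothesis on $M$) and the finite termination assertion of Axiom V, without which $N$ would not be well-defined. Everything else is routine manipulation of the defining span.
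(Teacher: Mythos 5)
Your proposal is correct and follows essentially the same route as the paper: monotonicity from the nesting of the $\mathcal P_k$, terminal equality from Axiom~V plus the unital clause of (R1), the computation $\rho(T)\rho(S)m=\rho(T\circ S)m$ for (b), and the intertwining identity on generators for (c). The only cosmetic difference is that you justify (b) by citing (R2) directly, where the paper invokes $\mathcal P_k\circ\mathcal P_\ell\subseteq\mathcal P_{k+\ell}$ via Axioms~III--IV; since the paper itself notes that (R2) is forced by those axioms, the two justifications coincide.
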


\begin{proof}
(a) By Axiom~IV, $\mathcal P_k\subseteq\mathcal P_{k+1}$, hence
\[
F_kM
=\langle\rho(\mathcal P_k)M\rangle
\subseteq
\langle\rho(\mathcal P_{k+1})M\rangle
=F_{k+1}M.
\]
By finite termination (Axiom~V), $\mathcal P_N=\mathcal P$, so
\[
F_NM=\langle\rho(\mathcal P)M\rangle.
\]
If $\mathcal P$ is unital, $\rho(1)=\id_M$ implies $F_NM=M$.

\medskip
(b) Let $T\in\mathcal P_k$ and let $x\in F_\ell M$.
By definition, $F_\ell M$ is spanned by vectors of the form $\rho(S)m$ with
$S\in\mathcal P_\ell$ and $m\in M$.  Writing $x=\rho(S)m$, we compute
\[
\rho(T)x=\rho(T)\rho(S)m=\rho(T\circ S)m.
\]
By Axioms~III-IV,
\[
\mathcal P_k\circ \mathcal P_\ell \subseteq \mathcal P_{k+\ell}.
\]
Thus $T\circ S\in\mathcal P_{k+\ell}$, and $\rho(T)x\in F_{k+\ell}M$.

\medskip
(c) Let $f:(M,\rho)\to(M',\rho')$ be a morphism.
For generators $\rho(T)m$ of $F_kM$,
\[
f(\rho(T)m)=\rho'(T)f(m)\in F_kM'.
\]
Linearity yields $f(F_kM)\subseteq F_kM'$.
\end{proof}

\begin{remark}
Unlike classical semisimple representation theory, representations of an
algebraic phase are not classified solely by isomorphism classes of modules.
The defect-induced filtration governs complexity and boundary phenomena, and
filtration-preserving intertwiners are the correct notion of equivalence.
\end{remark}

\subsubsection*{Boundary strata and their functorial invariance}

The defect-induced filtration on a representation separates rigid behaviour
from higher-order complexity.  The associated graded pieces isolate the
\emph{new} structure appearing at each level of complexity and provide a
precise notion of structural boundary within the representation.

In this subsection we show that these boundary strata are unavoidable and
functorial: any morphism compatible with the axioms preserves them, and no
equivalence can collapse a genuine boundary layer without altering the
associated graded structure. Let $(M,\rho,(F_k M))$ be a filtered representation.

\begin{definition}\label{def:boundary-strata-rep}
The \emph{$k$th boundary stratum} of the filtered representation
$(M,\rho,(F_k M))$ is the associated graded component
\[
\operatorname{gr}_k(M) := F_k M / F_{k-1} M,
\qquad k \ge 0,\; F_{-1} M := 0 .
\]
\end{definition}

\begin{theorem}\label{thm:boundary-preservation}
Let
\[
f:(M,\rho,(F_k M))\longrightarrow (N,\rho',(F_k N))
\]
be a filtration-compatible morphism of filtered representations. Then:
\begin{enumerate}[label=\textup{(\alph*)}]
\item For each $k$, $f$ induces a well-defined linear map
\[
\operatorname{gr}_k(f):\operatorname{gr}_k(M)\longrightarrow\operatorname{gr}_k(N).
\]
\item If $f$ is a filtration-compatible isomorphism, then each
$\operatorname{gr}_k(f)$ is an isomorphism.  In particular, nontrivial boundary
strata cannot be eliminated under equivalence.
\end{enumerate}
Equivalently, no functor compatible with the axioms can collapse genuine
boundary layers without altering the associated graded structure.
\end{theorem}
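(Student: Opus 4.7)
The plan is to treat part (a) as a direct application of the universal property of quotients, and part (b) as a consequence of Theorem \ref{thm:filtered-rep}(c) applied to the inverse morphism. The argument is essentially formal; no subtleties arising from defect or from Axioms I--V enter beyond what has already been extracted in Theorem \ref{thm:filtered-rep}.

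For (a), filtration-compatibility gives both $f(F_k M) \subseteq F_k N$ and $f(F_{k-1} M) \subseteq F_{k-1} N$. Restricting $f$ to $F_k M$ and composing with the projection $F_k N \twoheadrightarrow \operatorname{gr}_k(N)$ yields a linear map $F_k M \to \operatorname{gr}_k(N)$ that vanishes on $F_{k-1} M$. By the universal property of the quotient, this factors uniquely through $\operatorname{gr}_k(M) = F_k M / F_{k-1} M$, producing the desired $\operatorname{gr}_k(f)$. Linearity is automatic, as is the functoriality of the assignment $f \mapsto \operatorname{gr}_k(f)$.

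For (b), the essential point is to upgrade the one-sided inclusion $f(F_k M) \subseteq F_k N$ to equality when $f$ is a filtration-compatible isomorphism of representations. This is where Theorem \ref{thm:filtered-rep}(c) does the work: since $f^{-1}$ is itself a morphism of representations, it automatically preserves the canonical filtration, so $f^{-1}(F_k N) \subseteq F_k M$, which combined with the forward inclusion gives $f(F_k M) = F_k N$. Applying part (a) to both $f$ and $f^{-1}$ and invoking functoriality of the associated graded construction yields $\operatorname{gr}_k(f^{-1}) \circ \operatorname{gr}_k(f) = \operatorname{gr}_k(\id_M) = \id_{\operatorname{gr}_k(M)}$, and symmetrically in the other order. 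Hence each $\operatorname{gr}_k(f)$ is a linear isomorphism.

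The concluding assertion, that no axiom-compatible functor can collapse a genuine boundary layer, then follows formally: any such functor factors through the category of filtered $\mathcal P$-modules with filtration-preserving intertwiners (by Theorem \ref{thm:filtered-rep}), so on equivalences it induces isomorphisms on each $\operatorname{gr}_k$, and a nontrivial $\operatorname{gr}_k(M)$ cannot be mapped isomorphically to the zero object. The only conceptual obstacle, as opposed to a technical one, is recognising that in the APT setting a filtration-compatible isomorphism automatically carries bidirectional filtration preservation, precisely because Theorem \ref{thm:filtered-rep}(c) applies to $f^{-1}$ as well as to $f$. Once this observation is in place, the proof reduces to standard filtered linear algebra.
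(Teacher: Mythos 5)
Your proof is correct, and its core mechanism is the same as the paper's: part (a) by the universal property of the quotient applied to the composite $F_kM\to F_kN\twoheadrightarrow F_kN/F_{k-1}N$, and part (b) by applying $\operatorname{gr}_k$ to both $f$ and its inverse and using functoriality to get mutually inverse maps on the graded pieces. The one place you deviate is in how the inverse acquires filtration-compatibility: the paper simply reads ``filtration-compatible isomorphism'' as an isomorphism admitting a filtration-compatible inverse $g$ and proceeds, whereas you derive $f^{-1}(F_kN)\subseteq F_kM$ from Theorem~\ref{thm:filtered-rep}(c), since $f^{-1}$ is again a morphism of representations. This buys you a marginally stronger statement (a bijective filtration-compatible intertwiner suffices, with no separate hypothesis on the inverse), but note that it silently uses the fact that the filtrations in question are the \emph{canonical} defect-induced filtrations of Definition~\ref{def:induced-filtration}, which is the only setting in which Theorem~\ref{thm:filtered-rep}(c) applies; for an arbitrary compatible filtration a bijective filtration-compatible map need not have filtration-compatible inverse, which is presumably why the paper builds that condition into the hypothesis. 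Also, your claimed upgrade to the equality $f(F_kM)=F_kN$ is not actually needed for the composition argument --- compatibility of $f$ and $f^{-1}$ already suffices --- so it is harmless but superfluous.
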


\begin{proof}
(a) Since $f$ is filtration-compatible, we have
$f(F_k M)\subseteq F_k N$ and $f(F_{k-1} M)\subseteq F_{k-1} N$.
Consider the composite
\[
F_k M \xrightarrow{\,f\,} F_k N \twoheadrightarrow F_k N/F_{k-1} N.
\]

If $x\in F_{k-1} M$, then $f(x)\in F_{k-1} N$, so the image of $x$ vanishes in
the quotient $F_k N/F_{k-1} N$.  Hence the composite map vanishes on
$F_{k-1} M$ and depends only on the class of an element of $F_k M$ modulo
$F_{k-1} M$. It therefore descends to a well-defined linear map
\[
\operatorname{gr}_k(f):
F_k M/F_{k-1} M \longrightarrow F_k N/F_{k-1} N.
\]

\medskip (b) If $f$ admits a filtration-compatible inverse
\[
g:(N,\rho',(F_k N))\longrightarrow(M,\rho,(F_k M)),
\]
then both $f$ and $g$ induce maps on associated graded pieces by part~(a).
Moreover, the identities
\[
g\circ f=\id_M,
\qquad
f\circ g=\id_N
\]
imply that
\[
\operatorname{gr}_k(g)\circ\operatorname{gr}_k(f)=\id_{\operatorname{gr}_k(M)},
\qquad
\operatorname{gr}_k(f)\circ\operatorname{gr}_k(g)=\id_{\operatorname{gr}_k(N)}.
\]
Hence $\operatorname{gr}_k(f)$ is an isomorphism for all $k$.

\medskip
(c) In Algebraic Phase Theory, rigidity is identified with the defect-zero layer
$k=0$ (Axioms~III-IV), while higher strata $k>0$ record genuinely new
defect-induced complexity.  The appearance of such higher layers therefore
signals the presence of a structural boundary.

By parts~(a) and~(b), any filtration-compatible equivalence preserves each
associated graded piece $\operatorname{gr}_k$.  Consequently, no functor
respecting the axioms can collapse or bypass a nontrivial boundary stratum:
transporting the rigid layer across a genuine boundary would necessarily alter
the associated graded structure. Thus structural boundaries are intrinsic invariants of the representation
theory and cannot be removed without violating the axioms of Algebraic Phase
Theory.
\end{proof}

\begin{remark}
The content of the boundary preservation theorem is not that arbitrary linear
maps preserve filtrations, but that \emph{the only morphisms compatible with the
axioms} force boundary strata to be functorial invariants.  Structural
boundaries are therefore not artifacts of presentation, but intrinsic features
of the representation theory.

In the absence of semisimplicity, Algebraic Phase Theory replaces
irreducibility by a stratified notion of complexity, canonically encoded by the
defect-induced filtration.  The associated graded layers play the role of
irreducible constituents in classical representation theory, but without
assuming splitting or complete reducibility.  In this sense, boundary strata
function as the fundamental building blocks of representations, while the
canonical filtration replaces semisimple decomposition.
\end{remark}

\medskip

The following table summarizes this structural correspondence, highlighting
the precise sense in which boundary strata and filtrations in APT replace the
organizing principles of classical semisimple representation theory.

\begin{center}
\begin{tabular}{cc}
\textbf{Classical representation theory} & \textbf{APT representation theory} \\
\cmidrule(lr){1-1}\cmidrule(lr){2-2}
Irreducible module        & Boundary stratum $\operatorname{gr}_k$ \\
Semisimple splitting     & Canonical filtration \\
Maschke's theorem        & Functorial filtration \\
Wedderburn blocks        & Associated graded layers \\
Simple $=$ building block & $\operatorname{gr}_k$ $=$ building block \\
\end{tabular}
\end{center}

\subsection{Indecomposability and Stratified Rigidity}

Classical notions of irreducibility and atomicity rely on semisimplicity and
collapse in the presence of defect.  Algebraic Phase Theory therefore requires
a new notion of rigidity that is compatible with canonical filtration and
boundary structure.  In this section we introduce APT-indecomposability and
show that it is precisely governed by the impossibility of splitting across
boundary strata.

\subsubsection*{APT-indecomposability}

In the presence of defect, invariant subobjects are unavoidable and no longer
signal reducibility in the classical sense.  The correct atomic notion must
respect the canonical filtration forced by the axioms and remain stable under
APT-compatible morphisms.  We therefore define APT-indecomposability as the
absence of proper filtration-compatible subobjects.

Let $(\mathcal P,\circ)$ satisfy Axioms~I-V and let $(M,\rho,(F_k M))$ be a
filtered representation.

\begin{definition}
\label{def:filtration-compatible-subobject}
A subobject $W \subseteq M$ of the filtered representation $(M,\rho,(F_k M))$
is \emph{filtration-compatible} if
\[
F_k W := W \cap F_k M
\]
defines an increasing filtration on $W$ such that
\[
\rho(\mathcal P_i)(F_j W) \subseteq F_{i+j} W
\quad\text{for all } i,j.
\]
\end{definition}

\begin{definition}
\label{def:APT-indecomp}
A filtered representation $(M,\rho,(F_k M))$ is \emph{APT-indecomposable} if it
admits no nontrivial proper filtration-compatible subobject:
\[
0 \subsetneq W \subsetneq M.
\]
\end{definition}

This notion replaces classical irreducibility in the presence of defect and is
stable under filtration-compatible equivalence.

\subsubsection*{Boundary strata and decomposition}

The defect-induced filtration stratifies a representation into successive
layers of increasing complexity.  A decomposition that isolates one of these
layers corresponds to a genuine structural splitting.  We formalize this
notion by defining decomposition across a boundary stratum.

Recall that the boundary strata of $M$ are the associated graded components
\[
\operatorname{gr}_k(M) := F_k M / F_{k-1} M .
\]

\begin{definition}
\label{def:boundary-decomposition}
A filtered representation $(M,\rho,(F_k M))$ \emph{decomposes across boundary
stratum $k$} if there exists a filtration-compatible decomposition
\[
M \;\cong\; M^{\le k} \oplus M^{>k}
\]
such that
\[
M^{\le k} \subseteq F_k M,
\qquad
M^{>k} \cap F_k M = 0.
\]
Equivalently, the boundary stratum
\[
\operatorname{gr}_k(M) := F_k M / F_{k-1} M
\]
splits as a direct summand of $M$.
\end{definition}

Such a decomposition isolates a boundary layer as an independent summand and
signals a failure of indecomposability at level $k$.

\medskip
\noindent
\textbf{Relation to classical Wedderburn theory.}
When such a boundary decomposition exists, it plays the role of a
Wedderburn-type block decomposition in classical semisimple
representation theory.  The essential difference is that Algebraic Phase
Theory makes no assumption that such decompositions occur.
Instead, the existence or failure of a boundary decomposition is treated
as intrinsic structural data, governed by defect and detected by the
canonical filtration.

\medskip

\begin{center}
\begin{tabular}{cc}
\textbf{Classical representation theory} & \textbf{APT representation theory} \\
\cmidrule(lr){1-1}\cmidrule(lr){2-2}
Wedderburn block         & Boundary stratum $\operatorname{gr}_k(M)$ \\
Semisimple algebra      & Defect-free phase \\
Every module decomposes & Decomposition may fail \\
Irreducible module      & APT-indecomposable module \\
Decomposition guaranteed & Decomposition is obstructed
\end{tabular}
\end{center}

\medskip
\noindent
Rather than guaranteeing decomposition, Algebraic Phase Theory provides a
canonical \emph{detection mechanism}: if a representation decomposes in a manner
compatible with the axioms, then the decomposition must occur across a boundary
stratum, and is therefore detected by the associated graded structure.

\subsubsection*{Indecomposability boundary correspondence}

APT-indecomposability and boundary decomposition are two perspectives on the
same structural phenomenon.  The following theorem shows that a filtered
representation is atomic in the APT sense if and only if no boundary stratum
splits off as an independent summand.

\begin{theorem}\label{thm:indecomp-boundary}
If a filtered representation $(M,\rho,(F_k M))$ is APT-indecomposable, then it does not
decompose across any boundary stratum of the canonical filtration.
\end{theorem}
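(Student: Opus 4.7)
My plan is to argue by contrapositive: assume $(M,\rho,(F_k M))$ admits a decomposition across some boundary stratum $k$ and produce a nontrivial proper filtration-compatible subobject, thereby violating Definition~\ref{def:APT-indecomp}. The work then reduces to unpacking the data of Definition~\ref{def:boundary-decomposition} and checking that its summands satisfy the requirements of Definition~\ref{def:filtration-compatible-subobject}.

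First, suppose $M \cong M^{\le k} \oplus M^{>k}$ with $M^{\le k} \subseteq F_k M$ and $M^{>k} \cap F_k M = 0$. I would verify that each summand is a filtration-compatible subobject by setting $F_j M^{\le k} := M^{\le k} \cap F_j M$ and similarly for $M^{>k}$. These are monotone by construction, and the condition $\rho(\mathcal P_i)(F_j M^{\le k}) \subseteq F_{i+j} M^{\le k}$ follows from Theorem~\ref{thm:filtered-rep}(b) applied in the ambient representation $M$, combined with the $\rho$-invariance of $M^{\le k}$ implicit in the phrase "filtration-compatible decomposition" in Definition~\ref{def:boundary-decomposition}: the splitting is taken internally to the category of filtered representations, so each summand is preserved by $\rho$.

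Second, I would confirm that both summands are simultaneously nontrivial and proper. Properness is automatic: if $M^{\le k} = M$ then $F_k M = M$, forcing $M^{>k} = M^{>k} \cap F_k M = 0$, and symmetrically on the other side. Nontriviality is where the boundary-stratum hypothesis does real work: the equivalent reformulation in Definition~\ref{def:boundary-decomposition} states that $\operatorname{gr}_k(M)$ appears as a direct summand of $M$, which is vacuous unless $\operatorname{gr}_k(M) \neq 0$. Reading the statement in the substantive sense that the stratum being split off is genuinely nonzero forces $M^{\le k} \neq 0$, and then the previous step forces $M^{>k} \neq 0$ as well.

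The existence of $M^{\le k}$ as a nontrivial proper filtration-compatible subobject contradicts APT-indecomposability, completing the contrapositive. The only genuinely subtle point is the nontriviality check in the third step: one must pin down that "decomposing across stratum $k$" presupposes $\operatorname{gr}_k(M) \neq 0$, so that the equivalent formulation of Definition~\ref{def:boundary-decomposition} yields a genuine nonzero summand rather than a vacuous splitting of the form $0 \oplus M$ or $M \oplus 0$. Everything else is a direct unfolding of definitions, together with the filtered action principle recorded in Theorem~\ref{thm:filtered-rep}.
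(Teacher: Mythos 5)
Your overall route is the same as the paper's: given a splitting $M\cong M^{\le k}\oplus M^{>k}$, endow $M^{\le k}$ with the intersection filtration $F_j M^{\le k}:=M^{\le k}\cap F_j M$, observe that filtration compatibility of the decomposition (equivalently, $\rho$-stability of the summand plus Theorem~\ref{thm:filtered-rep}(b)) gives $\rho(\mathcal P_i)(F_j M^{\le k})\subseteq F_{i+j}M^{\le k}$, and contradict Definition~\ref{def:APT-indecomp}; whether one phrases this as a contradiction or a contrapositive is immaterial.

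The one point where you go beyond the paper is the nontriviality/properness check, and there your proposed fix is only half right. Requiring $\operatorname{gr}_k(M)\neq 0$ does exclude the vacuous splitting $0\oplus M$ (that would force $F_kM=0$ and hence $\operatorname{gr}_k(M)=0$), but it does not exclude $M\oplus 0$: take $k=N$ at the top of the filtration, so $F_NM=M$ while $\operatorname{gr}_N(M)=M/F_{N-1}M$ may well be nonzero; then $M^{\le k}=M$, $M^{>k}=0$ satisfies every clause of Definition~\ref{def:boundary-decomposition}, and no proper subobject is produced. In particular your sentence ``and then the previous step forces $M^{>k}\neq 0$ as well'' is a non sequitur: that step shows $M^{\le k}=M\Rightarrow M^{>k}=0$, and knowing $M^{\le k}\neq 0$ tells you nothing about $M^{>k}$. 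The honest resolution --- which is also what the paper does tacitly when it simply declares $M^{\le k}$ to be ``nonzero proper'' --- is to read ``decomposes across boundary stratum $k$'' as requiring both summands to be nonzero; note the stated conditions already force $M^{\le k}=F_kM$, so this amounts to demanding $0\neq F_kM\neq M$. With that convention built into the definition rather than derived from $\operatorname{gr}_k(M)\neq 0$, your argument closes and coincides with the paper's.
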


\begin{proof}
Assume that $(M,\rho,(F_k M))$ is APT-indecomposable in the sense of
Definition~\ref{def:APT-indecomp}. Suppose, for contradiction, that $M$
decomposes across boundary stratum $k$ in the sense of
Definition~\ref{def:boundary-decomposition}. Then there exists a
filtration-compatible decomposition
\[
M \cong M^{\le k} \oplus M^{>k}
\]
with
\[
M^{\le k}\subseteq F_k M,
\qquad
M^{>k}\cap F_k M=0.
\]

Since this is a decomposition of $\mathcal P$-modules, $M^{\le k}$ is a nonzero
proper $\mathcal P$-stable subrepresentation of $M$. Define
\[
F_j(M^{\le k}) := M^{\le k}\cap F_jM.
\]
Filtration compatibility of the decomposition implies that this filtration
satisfies
\[
\rho(\mathcal P_i)\bigl(F_j(M^{\le k})\bigr)\subseteq F_{i+j}(M^{\le k})
\quad\text{for all } i,j.
\]
Thus $M^{\le k}$ is a nontrivial proper filtration-compatible subobject of $M$
in the sense of Definition~\ref{def:filtration-compatible-subobject},
contradicting APT-indecomposability. Therefore $M$ does not decompose across any
boundary stratum of the canonical filtration.
\end{proof}

\medskip
\noindent
\textbf{Stratified rigidity.}
These results show that rigidity in Algebraic Phase Theory is inherently
stratified.  Atomicity is no longer characterized by the absence of invariant
subspaces, but by the impossibility of splitting across any boundary stratum of
the canonical filtration.  This replaces classical irreducibility once defect
is present.

\begin{remark}[Stratified rigidity and atomicity]
Classical irreducibility ignores defect-induced structure and is unstable under
APT-compatible morphisms.  By contrast, APT-indecomposability is invariant under
filtration-compatible equivalence and respects intrinsic structural boundaries
imposed by defect.
\end{remark}

\subsection{Non-Semisimplicity as Structural Necessity}

One of the most immediate consequences of Algebraic Phase Theory is that
classical semisimplicity is incompatible with nontrivial defect.  This failure
is not accidental, nor a pathology of particular examples: it is forced by the
axioms themselves. Classical structure results for non-semisimple rings may be found in
\cite{Jacobson1956Structure}.

\begin{theorem}
\label{thm:no-go-semisimple}
No algebraic phase $(\mathcal P,\circ)$ satisfying Axioms~I-V with nonzero
defect admits a semisimple representation theory compatible with its canonical
filtration.
\end{theorem}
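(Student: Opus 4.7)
The plan is to argue by contradiction using Theorem \ref{thm:indecomp-boundary} as the decisive lever. First I would fix the meaning of ``semisimple representation theory compatible with the canonical filtration'': every filtration-compatible subrepresentation of every filtered $\mathcal P$-module admits a filtration-compatible direct complement. Applied to the topmost step $F_{d-1}M \subsetneq F_d M = M$ of the induced filtration on an arbitrary representation $M$, this hypothesis would force a splitting
\[
M \;\cong\; F_{d-1}M \oplus N, \qquad N\cap F_{d-1}M = 0,
\]
which is precisely a decomposition across boundary stratum $d$ in the sense of Definition \ref{def:boundary-decomposition}, with $N \cong \operatorname{gr}_d(M)$.

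The core of the argument is then to exhibit a single filtered representation that is APT-indecomposable while having $F_{d-1}M \subsetneq M$. The natural candidate is the tautological operator realization on $\mathcal H(A)$: nonzero defect supplies a phase $\phi \in \Phi(A)$ with $\deg_{\Def}(\phi) = d$, and one takes the cyclic module generated by a single vector under the action of the phase operators $M_\phi$, filtered by the defect degree of the acting operators as in Definition \ref{def:induced-filtration}. The nontriviality of the defect tensor $\Def(\phi)$ forces $F_{d-1}M \neq M$, since a filtered splitting off the top layer would trivialise the iterated additive difference captured by $\Def(\phi)$ and contradict $\deg_{\Def}(\phi) = d$. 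Cyclicity together with the interaction law then rules out proper filtration-compatible subrepresentations, yielding APT-indecomposability in the sense of Definition \ref{def:APT-indecomp}. Applying Theorem \ref{thm:indecomp-boundary} to this $M$ contradicts the boundary decomposition forced in the previous step, completing the argument.

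The main obstacle is establishing the APT-indecomposability of this model representation, rather than the splitting reformulation or the contradiction itself. Concretely, one must show that the defect tensor genuinely entangles successive filtration layers, so that no proper filtration-compatible subobject can simultaneously close up under the interaction law and meet the top stratum nontrivially. Here the uniform bounded degree (E2), functoriality (E1), and interaction law (E4) combine to ensure that any subfamily of phase operators of defect degree below $d$ cannot generate a proper stable subspace whose image in $\operatorname{gr}_d(M)$ is nonzero. Once this entanglement is verified, the remainder of the proof is a direct application of Theorem \ref{thm:indecomp-boundary} to the explicit model.
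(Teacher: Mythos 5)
Your overall route coincides with the paper's: assume a semisimple representation theory compatible with the filtration, note that it forces a splitting across a boundary stratum in the sense of Definition~\ref{def:boundary-decomposition}, and contradict Theorem~\ref{thm:indecomp-boundary}. The difference is that you make explicit what the paper leaves implicit, namely that the contradiction only bites if one has in hand an APT-indecomposable representation whose induced filtration has a genuinely nontrivial top stratum, and you propose to manufacture such a witness as a cyclic module inside the tautological realization on $\mathcal H(A)$. That is precisely where your proposal has a genuine gap, and it is not a deferred verification but the heart of the matter. Cyclicity together with the interaction law does not rule out proper filtration-compatible subobjects: in the tautological realization the operators $M_\phi$ are simultaneously diagonalizable multiplication operators, so a cyclic module under them typically carries an abundance of invariant subspaces, and each such subspace inherits a filtration by intersection exactly as in Definition~\ref{def:filtration-compatible-subobject}; nothing in (E1), (E2), (E4) as stated obstructs this, so APT-indecomposability in the sense of Definition~\ref{def:APT-indecomp} is asserted rather than proved.

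The second unproven claim is the strict inclusion $F_{d-1}M \subsetneq M$. By Definition~\ref{def:induced-filtration}, $F_0M$ already contains $\rho(T)m$ for every defect-degree-zero element $T$, and defect-zero elements (the operator attached to an additive phase, or the identity when $\mathcal P$ is unital) generally act invertibly; this gives $F_0M=M$ and collapses all higher strata $\operatorname{gr}_k(M)$, $k>0$. Your argument that the nontriviality of $\Def(\phi)$ forces $F_{d-1}M\neq M$ conflates the defect degree of the phase $\phi$, which is a property of the function $\phi:A\to R$, with a property of the module filtration $(F_kM)$, and no bridge between the two is supplied. The paper's own proof sidesteps the construction of a witness by arguing that semisimplicity would split the boundary strata of every representation and then invoking Theorem~\ref{thm:indecomp-boundary}; your version is more honest about what is needed, but as written the step you yourself label the main obstacle --- producing an APT-indecomposable filtered representation with a nonvanishing top stratum --- is exactly the missing ingredient, so the proof is incomplete at its decisive point.
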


\begin{proof}
Suppose, for the sake of contradiction, that $(\mathcal P,\circ)$ admits a semisimple
representation theory compatible with its canonical filtration. By semisimplicity, every filtration-compatible subrepresentation admits a
filtration-compatible complement.  In particular, every filtered representation
decomposes as a direct sum of filtration-compatible subobjects.

Since $(\mathcal P,\circ)$ has nonzero defect, Axiom~III implies the existence
of elements of positive defect degree, and hence nontrivial filtration layers
$\mathcal P_k \neq \mathcal P_{k-1}$ for some $k>0$.  By Axiom~IV, this filtration
is canonical and functorial, so every representation carries nontrivial boundary
strata
\[
\operatorname{gr}_k(M) = F_k M / F_{k-1} M
\]
for some $k>0$.

Semisimplicity then forces each such boundary stratum to split off as a direct
summand, yielding a filtration-compatible decomposition of $M$ across boundary
stratum $k$. This contradicts Theorem~\ref{thm:indecomp-boundary}, which shows that
APT-indecomposable representations cannot admit filtration-compatible
decompositions across boundary strata.

Moreover, finite termination (Axiom~V) excludes the possibility of absorbing
defect through an infinite sequence of splittings: defect persists through a
finite hierarchy of boundary layers and cannot vanish under successive
semisimple decompositions. Hence no semisimple representation theory compatible with the canonical
filtration can exist when nonzero defect is present.
\end{proof}

\begin{remark}
The failure of semisimplicity in Algebraic Phase Theory is not a deficiency of
the framework, but a structural feature forced by the presence of defect.
Nonzero defect produces intrinsic extension data, canonical filtrations, and
boundary strata that cannot be eliminated without violating the axioms.

In classical settings (for example, finite groups over fields of characteristic
zero), defect vanishes and the canonical filtration collapses at level zero.
In this degenerate, boundary-free regime, the obstruction disappears and
semisimplicity is recovered as a special case of the general theory rather than
its organizing principle.
\end{remark}

\section{Extraction of the Frobenius Heisenberg Algebraic Phase}

We now apply the abstract framework developed above to a concrete and central
example. The purpose of this section is to show explicitly how the
Frobenius Heisenberg algebraic phase is canonically extracted from admissible
phase data and how the Heisenberg group emerges as its unit group.

\medskip

\noindent\textbf{Fixed Frobenius input.}
Fix a finite Frobenius ring $R$ together with a \emph{generating} additive
character
\[
\varepsilon:(R,+)\to\C^\times .
\]
Let
\[
\mu_R := \varepsilon(R)\subset \C^\times
\]
denote its (finite) image.
Fix also a bilinear pairing
\[
\beta: R^n\times R^n \to R,
\]
and write the associated bicharacter
\[
\chi_\beta(b,a) := \varepsilon(\beta(b,a)) \in \mu_R .
\]
We assume that $\beta$ is $\varepsilon$-nondegenerate in the sense that
for every nonzero $x\in R^n$ there exists $y\in R^n$ with
$\varepsilon(\beta(y,x))\neq 1$, and similarly with $x,y$ reversed.
Equivalently, the bicharacter $\chi_\beta$ is nondegenerate.

\medskip

\noindent\textbf{Underlying additive object.}
Let
\[
A := R^n
\]
viewed as an object of the base category $\mathsf C$ of finite additive groups
(or finite left $R$-modules).

\medskip

\noindent\textbf{Phases.}
Define $\Phi(A)$ to be the family of \emph{linear phases} determined by $\beta$,
namely the functions
\[
\phi_b : A \to R,
\qquad
\phi_b(u) := \beta(b,u),
\quad b \in R^n .
\]

We verify explicitly that each $\phi_b$ has additive degree at most $1$.
Let $h \in A$. Using bilinearity of $\beta$, we compute the first additive
difference:
\[
\Delta_h \phi_b(u)
= \phi_b(u+h)-\phi_b(u)
= \beta(b,u+h)-\beta(b,u)
= \beta(b,h).
\]
In particular, $\Delta_h \phi_b(u)$ is independent of $u$. Applying a second
additive difference in the direction $h' \in A$, we obtain
\[
\Delta_{h,h'}\phi_b(u)
= \Delta_{h'}\bigl(\Delta_h \phi_b\bigr)(u)
= \Delta_{h'}\bigl(\beta(b,h)\bigr)
= 0 .
\]
Since this holds for all $h,h' \in A$, we have
\[
\Delta_{h_1,h_2}\phi_b \equiv 0,
\]
and hence $\phi_b$ has additive degree at most $1$.

\medskip

\noindent\textbf{Uniform bounded degree.}
It follows that the collection $\Phi(A)$ is uniformly bounded of additive degree
$d=1$.

\medskip

\noindent\textbf{Operator realization.}
Let
\[
S_R := \Fun(A,\C).
\]
Using the fixed generating character $\varepsilon$, each $\phi_b\in\Phi(A)$
acts by the phase multiplication operator
\[
(M_{\phi_b}f)(u) := \varepsilon(\phi_b(u))\,f(u)
= \varepsilon(\beta(b,u))\,f(u).
\]
For notational convenience, we write $M_b := M_{\phi_b}$.
Translations act by
\[
(T_af)(u):=f(u+a),\qquad a\in A.
\]

\medskip

\noindent\textbf{Interaction law.}
Let $\circ$ denote operator composition in $\End_\C(S_R)$.  The Weyl
commutation relation
\[
M_b T_a \;=\; \chi_\beta(b,a)\,T_a M_b
\]
exhibits the defect of commutativity as a \emph{central} scalar determined by
$\chi_\beta$.

\medskip

\noindent\textbf{Conclusion: admissible phase datum.}
With the above choices, the triple
\[
(A,\Phi,\circ)
\]
is an admissible phase datum (in the sense of
Definition~\ref{def:admissible-phase-datum}), where the choice of generating
character $\varepsilon$ is part of the datum.

\medskip

\noindent\textbf{Extracted algebraic phase.}
Let $\mathcal P\subseteq \End_\C(S_R)$ be the $\C$-subalgebra generated
by all $T_a$ and all $M_b$ and closed under $\circ$ (composition).  The pair
\[
(\mathcal P,\circ)
\]
is the \emph{Frobenius Heisenberg algebraic phase (FH AP)}.

\medskip

\noindent\textbf{Heisenberg group as the unit group.}
Let $\mathcal P^\times$ denote the group of units in $\mathcal P$.  The subgroup
of $\GL(S_R)$ generated by $\{T_a\}$, $\{M_b\}$, and scalar operators
$\mu_R I$ is the Frobenius Heisenberg group $H_R$.

\begin{definition}
The \emph{Frobenius Heisenberg group} associated with $(R^n,\beta,\varepsilon)$
is the group
\[
H_R := R^n \times R^n \times \mu_R
\]
equipped with the multiplication law
\[
(x,y,\lambda)\,(x',y',\mu)
=
\bigl(x+x',\, y+y',\, \lambda\mu \,\varepsilon(\beta(y,x'))\bigr).
\]
\end{definition}

This multiplication realises $H_R$ as a central extension of $R^n\times R^n$ by
the finite cyclic group $\mu_R$, canonically determined by the Weyl commutation
relation.

\begin{proposition}
With the above multiplication, $H_R$ is a group fitting into a central extension
\[
1 \longrightarrow \mu_R \longrightarrow H_R
\longrightarrow R^n\times R^n \longrightarrow 1.
\]
If $\beta$ is nondegenerate, then the centre of $H_R$ is
\[
Z(H_R)=\{(0,0,\lambda) : \lambda \in \mu_R\}\cong \mu_R .
\]
\end{proposition}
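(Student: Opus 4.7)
\medskip
\noindent
\textbf{Proof proposal.}
The plan is to verify the group axioms by direct calculation, then identify the central extension structure and compute the centre using nondegeneracy of $\beta$. The three claims are essentially independent, so I would treat them in order.

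First I would show associativity, which is the only genuinely nontrivial algebraic check. Both triple products
\[
\bigl((x,y,\lambda)(x',y',\mu)\bigr)(x'',y'',\nu)
\quad\text{and}\quad
(x,y,\lambda)\bigl((x',y',\mu)(x'',y'',\nu)\bigr)
\]
have additive part $(x+x'+x'',\,y+y'+y'')$, and their scalar parts equal $\lambda\mu\nu$ times respectively
\[
\varepsilon\bigl(\beta(y,x')\bigr)\,\varepsilon\bigl(\beta(y+y',x'')\bigr)
\quad\text{and}\quad
\varepsilon\bigl(\beta(y',x'')\bigr)\,\varepsilon\bigl(\beta(y,x'+x'')\bigr).
\]
Bilinearity of $\beta$ (specifically $\beta(y,x'+x'')=\beta(y,x')+\beta(y,x'')$) collapses both to the common value $\varepsilon(\beta(y,x')+\beta(y,x'')+\beta(y',x''))$. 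The identity $(0,0,1)$ works on both sides because $\beta$ is zero when either argument is zero, and the inverse of $(x,y,\lambda)$ is $(-x,-y,\lambda^{-1}\varepsilon(\beta(y,x)))$, as one sees by solving the identity equation.

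Next I would establish the central extension. The injection $\iota:\mu_R\hookrightarrow H_R$, $\lambda\mapsto(0,0,\lambda)$, is a homomorphism since $(0,0,\lambda)(0,0,\mu)=(0,0,\lambda\mu)$, and the projection $\pi:H_R\to R^n\times R^n$, $(x,y,\lambda)\mapsto(x,y)$, is a homomorphism by inspection of the first two coordinates. Exactness at $H_R$ is immediate, and centrality of $\iota(\mu_R)$ follows from the computation
\[
(0,0,\lambda)(x,y,\mu)=(x,y,\lambda\mu)=(x,y,\mu)(0,0,\lambda),
\]
again using that $\beta$ vanishes when an argument is zero.

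Finally, the centre computation. Suppose $(x_0,y_0,\lambda_0)\in Z(H_R)$. Comparing $(x_0,y_0,\lambda_0)(x,y,\mu)$ and $(x,y,\mu)(x_0,y_0,\lambda_0)$ forces
\[
\varepsilon\bigl(\beta(y_0,x)\bigr)=\varepsilon\bigl(\beta(y,x_0)\bigr)
\qquad\text{for all } x,y\in R^n.
\]
Specialising $y=0$ gives $\varepsilon(\beta(y_0,x))=1$ for all $x$, and specialising $x=0$ gives $\varepsilon(\beta(y,x_0))=1$ for all $y$. The $\varepsilon$-nondegeneracy hypothesis on $\beta$ then forces $y_0=0$ and $x_0=0$, so $Z(H_R)\subseteq\iota(\mu_R)$; the reverse inclusion was already checked. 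The only step that uses the Frobenius/nondegeneracy input is this last one, so it is the conceptual crux, though computationally the associativity check is the main bookkeeping task. No real obstacle arises; the work is essentially a careful unpacking of bilinearity together with a clean application of the nondegeneracy of $\chi_\beta$.
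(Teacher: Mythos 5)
Your proposal is correct and follows essentially the same route as the paper: associativity via bilinearity of $\beta$, the explicit inverse $(-x,-y,\lambda^{-1}\varepsilon(\beta(y,x)))$, and the centre computation via the $\varepsilon$-nondegeneracy of $\beta$. You merely write out the cocycle/associativity bookkeeping and the extension maps more explicitly than the paper, which states them as direct computations.
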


\begin{proof}
Associativity follows from bilinearity of $\beta$ and the cocycle identity.
The identity element is $(0,0,1)$, and a direct computation shows that
\[
(x,y,\lambda)^{-1}
=
\bigl(-x,-y,\lambda^{-1}\varepsilon(\beta(y,x))\bigr).
\]

If $\beta$ is nondegenerate, then for any nonzero $x\in R^n$ there exists
$y\in R^n$ with $\varepsilon(\beta(y,x))\neq 1$, and similarly with the roles
reversed.  Hence an element of $H_R$ commutes with all generators if and only if
its $R^n\times R^n$ component vanishes, giving the stated description of the
centre.
\end{proof}

\begin{remark}
Identifying $H_R$ abstractly with $R^n\times R^n\times\mu_R$ recovers the standard
finite Heisenberg group associated with a Frobenius ring.  In the present
framework, however, this description is secondary: the group structure is
\emph{forced} by the Weyl defect inside the Frobenius Heisenberg algebraic phase
$(\mathcal P,\circ)$.
\end{remark}

\begin{corollary}
Under the Schr\"odinger realisation $H_R\subseteq \GL(S_R)$, the centre
acts by scalar operators $\mu_R I$, and central characters are characters of the
finite group $\mu_R$, canonically determined by the choice of generating
character $\varepsilon$.
\end{corollary}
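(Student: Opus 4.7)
\medskip
\noindent\textbf{Proof proposal.}
The plan is to make the Schrödinger realisation $\sigma: H_R \to \GL(S_R)$ explicit and then specialise to the centre. First I would define
\[
\sigma(x,y,\lambda) := \lambda\, T_x M_y
\]
on generators and verify that $\sigma$ is a group homomorphism. The check reduces to the Weyl relation
\[
M_y T_{x'} = \varepsilon(\beta(y,x'))\, T_{x'} M_y
\]
built into the interaction law, together with the additivity $T_x T_{x'} = T_{x+x'}$ and $M_y M_{y'} = M_{y+y'}$; this is precisely the cocycle $\varepsilon(\beta(y,x'))$ appearing in the multiplication law of $H_R$, so the two are compatible by construction.

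With $\sigma$ in place, the corollary follows directly from the preceding Proposition. That proposition identifies $Z(H_R) = \{(0,0,\lambda) : \lambda \in \mu_R\}$, and substitution gives $\sigma(0,0,\lambda) = \lambda\, T_0 M_0 = \lambda \cdot \id_{S_R}$, so the image of $Z(H_R)$ is exactly the scalar subgroup $\mu_R I \subseteq \GL(S_R)$. This proves the first assertion. A central character of any representation of $H_R$ is by definition the scalar through which $Z(H_R)$ acts; via the isomorphism $Z(H_R) \cong \mu_R$ it is canonically a character of the finite cyclic group $\mu_R$, and the tautological character supplied by the Schrödinger realisation is the natural inclusion $\mu_R \hookrightarrow \C^\times$. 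Because $\mu_R = \varepsilon(R)$ is the image of $\varepsilon$, and the inclusion $\mu_R \hookrightarrow \C^\times$ is nothing but the restriction of $\varepsilon$ to the subgroup it generates, both the centre and its family of characters depend canonically on the choice of generating character $\varepsilon$ in the admissible datum.

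The only substantive step is the homomorphism check for $\sigma$, and the main potential obstacle is a sign or ordering mismatch between the cocycle in the group law of $H_R$ and the Weyl defect. I expect no genuine difficulty here because the cocycle $\varepsilon(\beta(y,x'))$ was engineered precisely to match the Weyl commutation relation of the interaction law; everything else is bookkeeping on the previous Proposition.
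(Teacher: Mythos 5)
Your proposal is correct and takes essentially the same route as the paper: the paper's proof simply evaluates the realisation at $(0,0,\lambda)$, observes $f(u)\mapsto\lambda f(u)$, identifies the centre with the scalar subgroup $\mu_R I$, and notes that $\mu_R=\varepsilon(R)$ makes the central character data canonical in $\varepsilon$, exactly as you do. Your additional homomorphism check is the content of Proposition~\ref{prop:schrodinger-representation} later in the paper (which uses the ordering $\pi(x,y,\lambda)=\lambda M_y T_x$ rather than your $\lambda T_x M_y$), but since $T_0=M_0=\id_{S_R}$ the ordering is immaterial for the action of the centre.
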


\begin{proof}
For $(0,0,\lambda)\in Z(H_R)$, the corresponding operator in the Schr\"odinger
realisation acts by scalar multiplication:
\[
f(u)\longmapsto \lambda f(u)
\]
on $S_R=\Fun(R^n,\C)$.  Thus the centre identifies with the scalar subgroup
$\mu_R I$.  Since $\mu_R=\varepsilon(R)$ depends on the chosen generating
character, the resulting central character data is canonically determined (up
to equivalence of the model) by $\varepsilon$.
\end{proof}

\section{The Frobenius Heisenberg Schrödinger Representation}

The purpose of this section is to construct and analyse the canonical
representation of the Frobenius Heisenberg group that plays the central role
in the Frobenius Stone von Neumann theorem proved later in this paper.  
In classical settings, the Stone von Neumann theorem asserts that once the
action of the centre of the Heisenberg group is fixed, every irreducible
representation with that central character is uniquely determined up to
isomorphism.  This is a rigidity phenomenon that lies at the heart of
quantum representations of the Heisenberg group.

In the Frobenius setting, where nonsemisimplicity and defect force the
presence of extension data, classical irreducibility no longer provides the
correct framework for such a rigidity statement.  Nevertheless, the same
underlying rigidity persists once the central action and defect structure are
taken into account.  The correct replacement for the classical theorem asserts
that there is a single centrally faithful Frobenius indecomposable
representation with fixed central character, and that all other centrally
faithful representations arise as nontrivial extensions of this canonical
model.

The aim of this section is to construct this canonical model explicitly.  It
is the algebraic analogue of the classical Schrödinger representation, and it
is the building block from which all centrally faithful Frobenius Heisenberg
representations are obtained.  Its existence and basic structural properties
provide the foundation for the Frobenius Stone von Neumann theorem stated
later, where boundary rigidity replaces classical irreducibility as the
organising principle.

We begin by constructing the Frobenius Schrödinger representation and
verifying that it realises the Weyl commutation relation imposed by the
Frobenius Heisenberg algebraic phase.

\subsection{Construction of the Frobenius Heisenberg Schrödinger Representation}

The representation constructed here serves as the fundamental object against
which all centrally faithful representations are compared.  It realises the
minimal centrally faithful action compatible with the phase operators
extracted in the previous section and provides the canonical reference model
for the uniqueness statements that follow.

We now define the representation space and the canonical actions of translations
and phase modulations determined by the Frobenius data.  These operators encode
the Weyl commutation relation and together generate the basic representation of
the Frobenius Heisenberg group.

\subsection*{Definition of the representation}

We now define the representation space and the canonical actions of translations
and phase modulations determined by the Frobenius data. These operators encode the 
Weyl commutation relation. Together, they generate the basic representation of the 
Frobenius Heisenberg group. Define the Frobenius Schr\"odinger space

\[
S_R := \Fun(R^n,\C),
\]
with translation operators
\[
(T_x f)(u) := f(u+x), \qquad x,u\in R^n,
\]
and modulation operators
\[
(M_y f)(u) := \varepsilon(\beta(y,u))\,f(u), \qquad y,u\in R^n,
\]
where $\varepsilon:(R,+)\to\C^\times$ is the fixed generating character and
$\mu_R=\varepsilon(R)$.

\begin{definition}
The \emph{Frobenius Schr\"odinger representation} is the map
\[
\pi : H_R \longrightarrow \GL(S_R),
\qquad
\pi(x,y,\lambda) := \lambda\, M_y T_x ,
\]
where $(x,y,\lambda)\in R^n\times R^n\times\mu_R$.
\end{definition}

\subsubsection*{Representation property}

We next verify that the above assignment defines a genuine group
representation. This amounts to checking that the Weyl commutation relation
precisely reproduces the multiplication law of the Frobenius Heisenberg group
under operator composition.

\begin{proposition}\label{prop:schrodinger-representation}
The map $\pi$ is a group homomorphism. In particular, it realises $H_R$ as a
subgroup of $\GL(S_R)$ generated by translations, modulations, and
scalar operators:
\[
\pi(H_R)=\langle\, T_x,\ M_y,\ \mu_R I \,\rangle .
\]
\end{proposition}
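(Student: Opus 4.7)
The plan is to verify the homomorphism property by a direct computation using the Weyl relation established in the preceding section, and then to read off the description of $\pi(H_R)$ from the fact that the three families of generators appear as images under $\pi$ of especially simple elements of $H_R$.

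For multiplicativity I would compute
\[
\pi(x,y,\lambda)\,\pi(x',y',\mu) \;=\; \lambda\mu\,M_y T_x M_{y'} T_{x'}
\]
and commute $T_x$ past $M_{y'}$ using the Weyl commutation relation recorded earlier. This produces a single central scalar in $\mu_R$ of the form $\varepsilon(\beta(\cdot,\cdot))$; the remaining operators then collapse via $M_y M_{y'}=M_{y+y'}$ and $T_x T_{x'}=T_{x+x'}$, using bilinearity of $\beta$ in the first entry and additivity of the translation action. The resulting expression has the shape $\lambda\mu\,\varepsilon(\beta(\cdot,\cdot))\,M_{y+y'}T_{x+x'}$, which, by the definition of the Heisenberg multiplication, is precisely $\pi\bigl((x,y,\lambda)(x',y',\mu)\bigr)$. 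The identity $(0,0,1)$ visibly maps to $I$, so multiplicativity is enough to conclude that $\pi$ is a group homomorphism into $\GL(S_R)$; in particular each $\pi(x,y,\lambda)$ is invertible, with inverse given by the explicit formula for $(x,y,\lambda)^{-1}$ derived in the preceding section.

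For the image claim, observe that $T_x=\pi(x,0,1)$, $M_y=\pi(0,y,1)$, and $\lambda I=\pi(0,0,\lambda)$ all lie in $\pi(H_R)$, giving $\langle T_x,\, M_y,\, \mu_R I\rangle\subseteq\pi(H_R)$. Conversely, every element $(x,y,\lambda)\in H_R$ is a product of such elements inside $H_R$: using the multiplication law of the preceding section, one checks that $(x,0,1)\,(0,y,1)\,(0,0,\lambda')$ realises $(x,y,\lambda)$ for an appropriate $\lambda'\in\mu_R$ that absorbs the cocycle correction. Applying $\pi$ to this factorisation places $\pi(x,y,\lambda)$ inside the generated subgroup, giving the reverse inclusion.

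The only real obstacle is conventional bookkeeping: one must verify that the central scalar produced by commuting $T_x$ past $M_{y'}$ matches exactly the cocycle $\varepsilon(\beta(y,x'))$ built into the group law of $H_R$. Since that group law was defined in the preceding section precisely to encode the Weyl relation inside the Frobenius Heisenberg algebraic phase, this matching is essentially tautological once the ordering conventions for $M_y T_x$ versus $T_x M_y$ are fixed, and no substantive identity about $\beta$ or $\varepsilon$ beyond bilinearity and additivity is required.
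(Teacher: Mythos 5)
Your proposal is correct and follows essentially the same route as the paper's proof: a direct computation of $\pi(x,y,\lambda)\pi(x',y',\mu)$ using the Weyl relation to commute $T_x$ past $M_{y'}$, collapsing $M_yM_{y'}$ and $T_xT_{x'}$, and matching the resulting central scalar with the cocycle in the group law, together with the observation that each $T_x$, $M_y$, $\lambda I$ is invertible. Your explicit two-sided verification of $\pi(H_R)=\langle T_x,\,M_y,\,\mu_R I\rangle$ via the factorisation $(x,y,\lambda)=(x,0,1)(0,y,1)(0,0,\lambda')$ is a small addition the paper leaves implicit, and your caution about the ordering convention in the cocycle matching is exactly the point where care is needed.
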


\begin{proof}
Each operator $T_x$, $M_y$, and $\lambda I$ with $\lambda\in\mu_R$ is 
invertible on $S_R$, with inverses $T_{-x}$, $M_{-y}$, and
$\lambda^{-1}I$, respectively. Hence $\pi(x,y,\lambda)\in\GL(S_R)$. 

Let $(x,y,\lambda),(x',y',\mu)\in H_R$. Using the Weyl commutation relation
\[
T_x M_{y'} = \chi_\beta(y',x)\, M_{y'} T_x,
\qquad
\chi_\beta(y',x)=\varepsilon(\beta(y',x)),
\]
we compute
\begin{align*}
\pi(x,y,\lambda)\,\pi(x',y',\mu)
&= (\lambda M_y T_x)(\mu M_{y'} T_{x'}) \\
&= \lambda\mu\, M_y (T_x M_{y'}) T_{x'} \\
&= \lambda\mu\, \chi_\beta(y',x)\, M_y M_{y'} T_x T_{x'} \\
&= \lambda\mu\, \varepsilon(\beta(y',x))\, M_{y+y'} T_{x+x'} \\
&= \pi\!\bigl(x+x',\, y+y',\, \lambda\mu\, \varepsilon(\beta(y',x))\bigr).
\end{align*}
By the definition of the group law on $H_R$, this equals
$\pi\bigl((x,y,\lambda)(x',y',\mu)\bigr)$. Hence $\pi$ is a homomorphism.
\end{proof}

\subsubsection*{Structural remarks}

\begin{remark}
The representation $\pi$ depends on the Frobenius data only through the
generating character $\varepsilon$ appearing in the modulation operators
$M_y$. Changing $\varepsilon$ modifies the scalar commutators and hence the
central character, but leaves the underlying translation modulation structure
unchanged.

The Frobenius Schr\"odinger representation is the purely algebraic analogue of
the classical Schr\"odinger representation on $L^2(\R^n)$. No topology,
unitarity, or measure theory is used: the representation is forced entirely by
the Weyl commutation relation and the Frobenius duality encoded by
$\varepsilon\circ\beta$.

Within Algebraic Phase Theory, $\pi$ realises the minimal centrally faithful
representation compatible with the canonical filtration of the Frobenius
Heisenberg algebraic phase. The Stone von Neumann type rigidity proved later is
a consequence of this structural minimality rather than of classical
irreducibility or semisimplicity.
\end{remark}

\subsection{Central Faithfulness and Frobenius Indecomposability}

The classical Stone von Neumann theorem organises representations of the
Heisenberg group by irreducibility. This relies crucially on semisimplicity:
invariant subspaces split, irreducible modules exist in abundance, and
irreducibility provides a stable atomic notion.

In the non semisimple setting arising from finite Frobenius rings, this picture
breaks down. Representations may admit nontrivial extensions, invariant
subspaces need not split, and irreducible modules no longer control the
representation category. As a result, classical irreducibility is neither
stable nor sufficiently rigid to support a Stone von Neumann type uniqueness
statement.

What survives is the rigidity imposed by the centre of the Heisenberg group.
Fixing a nontrivial scalar action of the centre rigidifies the representation
theory in exactly the manner required to recover a meaningful uniqueness
theorem. The purpose of this section is to isolate the correct replacement for
irreducibility in this setting, namely central faithfulness together with a
minimality condition adapted to the central action.

\subsubsection*{Central characters and faithfulness}

We begin by formalising the role of the centre in controlling rigidity. Since
the Heisenberg group is defined as a central extension, the scalar action of the
centre is the primary invariant distinguishing inequivalent representations.
In both the classical and finite settings, fixing this action is the essential
input underlying Stone von Neumann type rigidity.

Accordingly, we isolate representations in which the centre acts by scalars and
introduce central faithfulness as the condition that the resulting scalar
action is nontrivial. This notion captures precisely the residual rigidity
present in the absence of semisimplicity.

\begin{definition}
Let $\rho:H_R\to \GL(V)$ be a complex representation.  We say that $\rho$ has
\emph{central character} $\chi:Z(H_R)\to \mu_R$ if the centre acts by scalars:
\[
\rho(z)=\chi(z)\,\id_V
\qquad\text{for all } z\in Z(H_R).
\]
\end{definition}

\begin{definition}
A representation $\rho$ is called \emph{centrally faithful} if its central
character $\chi$ is nontrivial.
\end{definition}

\begin{proposition}\label{prop:schrodinger-central-faithful}
The Frobenius Schr\"odinger representation $\pi$ is centrally faithful.  Its
central character is the identity character
\[
\chi_\pi : Z(H_R)\cong \mu_R \longrightarrow \C^\times,
\qquad
\chi_\pi(\lambda)=\lambda .
\]
\end{proposition}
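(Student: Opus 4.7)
The proof reduces to a direct computation on central elements together with an observation about the generating character hypothesis. The plan is to evaluate $\pi(0,0,\lambda)$ explicitly for $(0,0,\lambda)\in Z(H_R)$ and read off both the scalar action and the central character.

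First I would unfold the definition of $\pi$ on a central element. By the description of $Z(H_R)$ from the proposition of the previous section, every central element has the form $(0,0,\lambda)$ with $\lambda\in\mu_R$. Substituting into $\pi(x,y,\lambda)=\lambda M_y T_x$ gives $\pi(0,0,\lambda)=\lambda M_0 T_0$. Since $\beta$ is bilinear, $\beta(0,u)=0$ for all $u\in R^n$; together with $\varepsilon(0)=1$, this shows $(M_0 f)(u)=\varepsilon(0)f(u)=f(u)$, so $M_0=\id_{S_R}$. Similarly $(T_0 f)(u)=f(u)$, so $T_0=\id_{S_R}$. Hence $\pi(0,0,\lambda)=\lambda\,\id_{S_R}$, which shows simultaneously that $\pi$ has a well-defined central character and that this character is exactly $\chi_\pi(\lambda)=\lambda$ under the identification $Z(H_R)\cong\mu_R$ from the previous section.

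Second I would establish nontriviality of $\chi_\pi$. This is the only place where structural input beyond the definitions is needed: if $\mu_R=\{1\}$, then $\chi_\pi$ would be trivial. But $\varepsilon:(R,+)\to\C^\times$ is by hypothesis a \emph{generating} character of the Frobenius ring $R$, so its image $\mu_R=\varepsilon(R)$ generates the full character group of $R$; in particular $\mu_R$ is a nontrivial finite cyclic subgroup of $\C^\times$. Choose any $\lambda\in\mu_R\setminus\{1\}$; then $\chi_\pi(\lambda)=\lambda\neq 1$, so $\chi_\pi$ is nontrivial and $\pi$ is centrally faithful in the sense of the preceding definition.

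The argument has no real obstacle: the main content is conceptual rather than technical, namely that the generating character hypothesis is precisely what prevents the central action from collapsing. If any step were to cause difficulty it would be verifying that $\mu_R\neq\{1\}$ in edge cases, but this is immediate from the Frobenius/generating character assumption fixed at the outset of the previous section.
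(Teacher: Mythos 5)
Your proof is correct and takes essentially the same route as the paper: unfold $\pi(0,0,\lambda)=\lambda M_0T_0=\lambda\,\id_{S_R}$ and read off $\chi_\pi(\lambda)=\lambda$, with the paper leaving the nontriviality of $\mu_R$ implicit where you make it explicit. One small phrasing slip: the generating-character hypothesis says every additive character of $R$ has the form $x\mapsto\varepsilon(rx)$, not that the image $\mu_R=\varepsilon(R)\subset\C^\times$ ``generates the character group''; all you need is that $\varepsilon$ is nontrivial (true for a generating character of a nonzero ring), hence $\mu_R\neq\{1\}$, which your argument in substance uses.
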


\begin{proof}
Recall that $Z(H_R)=\{(0,0,\lambda):\lambda\in\mu_R\}$.  By definition of
$\pi$,
\[
\pi(0,0,\lambda)=\lambda\, M_0 T_0.
\]
Since $M_0=T_0=\id_{S_R}$, we obtain 
\[
\pi(0,0,\lambda)=\lambda\,\id_{S_R}.
\]
Thus the centre acts by scalars with nontrivial character $\chi_\pi(\lambda)=\lambda$.
\end{proof}

\subsubsection*{Frobenius indecomposability}

Central faithfulness alone is not sufficient to isolate minimal objects, since
centrally faithful representations may admit proper centrally faithful
subrepresentations, reflecting genuine extension phenomena in the non
semisimple category.  The appropriate replacement for irreducibility is
therefore defined relative to the central action.

\begin{definition}
A centrally faithful representation $\rho:H_R\to\GL(V)$ is called
\emph{Frobenius indecomposable} if it admits no proper, nonzero centrally
faithful subrepresentation.
\end{definition}

\begin{lemma}
\label{lem:existence-frob-indecomp}
Let $\rho:H_R\to\GL(V)$ be a finite-dimensional centrally faithful complex
representation.  Then $V$ contains a nonzero centrally faithful subrepresentation
which is Frobenius indecomposable. 
\end{lemma}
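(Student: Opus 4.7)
The plan is a short descending-chain, equivalently a minimal-dimension, argument that uses nothing about $H_R$ beyond the definitions already in place. Let $\mathcal{S}$ denote the collection of nonzero $H_R$-stable subspaces $W \subseteq V$ such that $W$ is centrally faithful. By hypothesis $V \in \mathcal{S}$, so $\mathcal{S}$ is nonempty, and each $\dim W$ lies in $\{1,\dots,\dim V\}$. Hence $\mathcal{S}$ contains an element of minimal dimension.

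First I would pick such a minimal-dimensional $W \in \mathcal{S}$ and claim that it is Frobenius indecomposable. If not, there would exist a proper nonzero centrally faithful subrepresentation $W' \subsetneq W$. Then $W' \in \mathcal{S}$ with $\dim W' < \dim W$, contradicting the minimality of $W$. Hence no such $W'$ exists, which is exactly the defining condition for Frobenius indecomposability; and $W$ is nonzero and centrally faithful by construction. Thus $W$ is the desired subrepresentation.

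The only auxiliary point that needs confirming is that central faithfulness propagates to nonzero $H_R$-stable subspaces, so that $\mathcal{S}$ is populated at every stage of the argument. This is immediate: if $V$ has nontrivial central character $\chi$ and $W \subseteq V$ is any nonzero $H_R$-stable subspace, then for $z \in Z(H_R)$ the operator $\rho(z)$ acts on $W$ by the scalar $\chi(z) \in \mu_R$, so $W$ inherits the same nontrivial central character and is centrally faithful. With this in hand, no serious obstacle remains: the lemma is essentially a finite-dimensional finiteness statement, and the representation-theoretic content has already been absorbed into the definitions of central character and central faithfulness. The structural weight of the result lies not in its proof but in the fact that the output $W$ is the object to which the Stone von Neumann type rigidity theorem of the next section will be applied.
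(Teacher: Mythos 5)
Your proposal is correct and is essentially the same argument the paper gives: take the nonempty set of nonzero centrally faithful $H_R$-subrepresentations, pick one of minimal dimension, and note that any proper nonzero centrally faithful subrepresentation would contradict minimality. Your extra observation that central faithfulness passes to nonzero stable subspaces (since the centre acts by a fixed scalar character) is a harmless and valid addition.
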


\begin{proof}
Let
\[
\mathcal C
=\bigl\{
\,0\neq W\subseteq V \;\big|\;
W \text{ is an $H_R$-subrepresentation and }
\rho|_W \text{ is centrally faithful}
\bigr\}.
\]
This set is nonempty since $V\in\mathcal C$.  Choose $W\in\mathcal C$ of minimal
complex dimension. 

If $0\neq W'\subsetneq W$ were a proper centrally faithful subrepresentation,
then $W'\in\mathcal C$ and $\dim_\C W'<\dim_\C W$, contradicting minimality.
Hence $W$ is Frobenius indecomposable.
\end{proof}

\begin{proposition}
The Frobenius Schr\"odinger representation is APT-indecomposable in the sense
that it admits no proper filtration-compatible subrepresentation on which the
centre of $H_R$ acts by the same character.
\end{proposition}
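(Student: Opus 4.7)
The plan is to prove something stronger: that $S_R$ admits no proper nonzero $H_R$-invariant subspace whatsoever. The conditions of filtration compatibility and agreement of central character are then automatic. Filtration compatibility holds because for any subrepresentation $W$, the prescription $F_k W = W \cap F_k S_R$ automatically satisfies $\rho(\mathcal P_i)(F_j W) \subseteq F_{i+j} W$, using only that $W$ is $\rho$-stable. Agreement of central character is automatic because, by Proposition~\ref{prop:schrodinger-central-faithful}, the centre of $H_R$ acts on $S_R$ by the scalar character $\chi_\pi(\lambda)=\lambda$, and this scalar action restricts identically to any subspace.

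First I would decompose $S_R$ spectrally under the commuting family of modulation operators $\{M_y\}_{y\in R^n}$. The delta basis $\{\delta_u : u \in R^n\}$ of $S_R$ provides simultaneous eigenvectors, with $M_y \delta_u = \varepsilon(\beta(y,u))\,\delta_u$, and associated joint character $\chi_u : y \mapsto \varepsilon(\beta(y,u))$. The essential Frobenius input enters next: using $\varepsilon$-nondegeneracy of $\beta$, I would show that the assignment $u \mapsto \chi_u$ is injective, since $\chi_u = \chi_{u'}$ forces $\varepsilon(\beta(y, u - u')) = 1$ for every $y$, and nondegeneracy then yields $u = u'$. Consequently each simultaneous $M_y$-eigenspace in $S_R$ is exactly the one-dimensional line $\mathbb{C}\delta_u$.

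Next I would take a nonzero $H_R$-invariant subspace $W \subseteq S_R$, apply the spectral decomposition to $W$ (which is stable under every $M_y$ and hence a sum of joint eigenspaces), and conclude that $W = \bigoplus_{u \in S}\mathbb{C}\delta_u$ for some nonempty $S \subseteq R^n$. Finally I would invoke invariance under translations: since $T_x \delta_u = \delta_{u-x}$, the subset $S$ must be closed under translation by every element of $R^n$, forcing $S = R^n$ and $W = S_R$.

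The main obstacle is the separation of the characters $\chi_u$, which is precisely where the Frobenius hypothesis is used essentially. Outside the Frobenius class, $\varepsilon \circ \beta$ need not separate points of $R^n$, so joint eigenspaces can have dimension greater than one and proper $M_y$-invariant subspaces may be translation-stable without filling out $S_R$. This is exactly the mechanism by which rigidity fails beyond the Frobenius boundary, foreshadowing the sharpness statement of the main theorem.
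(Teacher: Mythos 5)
Your proof is correct, but it takes a genuinely different and substantially stronger route than the paper. The paper's own proof is purely formal: it assumes a nonzero filtration-compatible subrepresentation $W$ with the same central character, observes that $W$ is then centrally faithful, and appeals to Frobenius indecomposability of $\pi$ to conclude $W=S_R$. Note that the paper never actually proves that $\pi$ \emph{is} Frobenius indecomposable --- Lemma~\ref{lem:existence-frob-indecomp} only guarantees that some Frobenius indecomposable subrepresentation exists inside a centrally faithful representation --- so the paper's argument rests on an unestablished hypothesis. Your argument, by contrast, proves outright that $S_R$ has no proper nonzero $H_R$-invariant subspace at all: the modulation operators $M_y$ act diagonally on the delta basis with joint characters $\chi_u(y)=\varepsilon(\beta(y,u))$, the $\varepsilon$-nondegeneracy of $\beta$ separates these characters so that each joint eigenspace is the line $\C\delta_u$, semisimplicity of the restriction to the finite abelian subgroup $Y$ forces any invariant $W$ to be a sum of such lines, and transitivity of the translations $T_x\delta_u=\delta_{u-x}$ then forces $W=S_R$. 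This simultaneously yields classical irreducibility, Frobenius indecomposability, and the stated APT-indecomposability (your observations that filtration compatibility via $F_kW=W\cap F_kS_R$ and agreement of the central character are automatic for $\mathcal P$-stable subspaces are both correct, the former by Theorem~\ref{thm:filtered-rep}(b)). What your approach buys is a self-contained, gap-free proof that makes the role of the Frobenius hypothesis explicit at the character-separation step --- essentially the same mechanism the paper defers to Lemma~\ref{lem:sch-induced} and Proposition~\ref{prop:intertwiner-exists}; what the paper's formulation buys is only brevity, at the cost of circularity unless the indecomposability of $\pi$ is supplied separately, which your argument in fact does.
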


\begin{proof}
Let $\pi:H_R\to\GL(S_R)$ be the Frobenius Schr\"odinger representation, and let
$\chi_\pi:Z(H_R)\to\C^\times$ denote its central character, so that
$\pi(0,0,\lambda)=\chi_\pi(\lambda)\,\id_{S_R}$ for $\lambda\in\mu_R$.

Assume that $W\subseteq S_R$ is a nonzero filtration-compatible
$H_R$-subrepresentation on which the centre acts by the same character
$\chi_\pi$.  In particular, $W$ is $\pi(H_R)$-stable and the central action on
$W$ is nontrivial, so the restricted representation $\pi|_W$ is centrally
faithful.

By the definition of Frobenius indecomposable representation, $\pi$ admits no
proper nonzero centrally faithful subrepresentation.  Since $W$ is a nonzero
centrally faithful subrepresentation of $S_R$, it follows that $W=S_R$.

Therefore there is no proper filtration-compatible subrepresentation of $S_R$
on which the centre acts by the same character, and $\pi$ is APT-indecomposable
in the stated sense.
\end{proof}

\subsubsection*{Conceptual remarks}

The notions introduced above identify how rigidity persists in the absence of
semisimplicity. Classical irreducibility fails in the Frobenius setting because
it ignores extension data and is unstable under morphisms compatible with
defect and filtration. As a result, irreducible modules no longer provide a
reliable atomic notion for representation theory.

Central faithfulness isolates the intrinsic rigidity coming from the canonical
central extension defining the Heisenberg group. Frobenius indecomposable
representations are the minimal objects compatible with a fixed nontrivial
central character and replace irreducible representations in the Frobenius
version of the Stone von Neumann theorem.

\begin{remark}
Over finite Frobenius rings, representation categories are generally not
semisimple. Invariant subspaces need not split, and extensions across central
characters may occur. Classical irreducibility is therefore unstable and
incompatible with the defect structure of Algebraic Phase Theory.

The central extension defining $H_R$ is the sole source of rigidity in the
theory. Fixing a nontrivial central character rigidifies representations up to
controlled extension phenomena. Frobenius indecomposable representations are
precisely the minimal objects compatible with this rigidity.

The Stone von Neumann theorem in the Frobenius setting asserts that, once the
central character is fixed, there exists a unique Frobenius indecomposable
representation of $H_R$. This representation is realised by the Frobenius
Schrodinger model constructed earlier.
\end{remark}

\subsection{Endomorphism Rings and Frobenius Schur Phenomena}

In the classical representation theory of finite or compact groups, Schur's
lemma asserts that the endomorphism ring of an irreducible representation
consists only of scalar operators. This rigidity plays a foundational role in
the classical Stone von Neumann theorem by ensuring that intertwiners between
irreducible representations with the same central character are unique up to
scalars.

In the Frobenius setting, irreducibility is no longer the correct organising
principle. Representation categories are not semisimple, endomorphism rings of
irreducible modules may be large, and nilpotent equivariant endomorphisms can
appear. As a result, classical Schur type arguments do not apply directly.

The purpose of this section is to show that rigidity is recovered once
irreducibility is replaced by central faithfulness together with Frobenius
indecomposability. We establish a Frobenius Schur lemma asserting that the
endomorphism ring of a centrally faithful Frobenius indecomposable
representation consists only of scalar operators. This result is the key
technical ingredient needed to upgrade intertwiners to isomorphisms and to
complete the Frobenius Stone von Neumann theorem.

\subsubsection*{Equivariant endomorphisms and the Frobenius Schur lemma}

To formulate a Schur type rigidity statement in the Frobenius setting, we
consider endomorphisms that commute with the action of the Heisenberg group.
Such operators measure internal symmetries of a representation that are
invisible to the group action itself.

Given a representation $\rho : H_R \to \GL(V)$, we define
\[
\End_{H_R}(V)
=
\{\, T \in \End_\C(V) \mid T\rho(g)=\rho(g)T \text{ for all } g \in H_R \,\}
\]
to be the algebra of $H_R$ equivariant endomorphisms of $V$.

In semisimple settings, Schur's lemma asserts that irreducibility forces this
endomorphism algebra to consist only of scalar operators. In the present non
semisimple Frobenius setting, irreducibility is no longer available. The
following result shows that the same rigidity holds once irreducibility is
replaced by central faithfulness together with Frobenius indecomposability.
This is the Frobenius Schur lemma.

\begin{theorem}\label{thm:frobenius-schur}
Let $\rho:H_R\to \GL(V)$ be a finite dimensional complex representation.
Assume that $\rho$ is centrally faithful and Frobenius indecomposable.
Then
\[
\End_{H_R}(V)=\C\cdot \id_V .
\]
\end{theorem}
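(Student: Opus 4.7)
The plan is to run the classical Schur lemma argument verbatim, with Frobenius indecomposability substituted for classical irreducibility. The crucial enabling observation is that, since the centre of $H_R$ acts on $V$ by the scalar character $\chi$, every nonzero $H_R$-stable subspace $W \subseteq V$ automatically inherits the same scalar central action, and hence inherits central faithfulness whenever $\chi$ is nontrivial. This converts Frobenius indecomposability into an effective replacement for irreducibility inside the Schur argument.

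Concretely, I would fix an arbitrary $T \in \End_{H_R}(V)$ and exploit that $V$ is finite dimensional over the algebraically closed field $\C$ to extract an eigenvalue $\lambda \in \C$ of $T$. Next I would form the eigenspace $W = \ker(T - \lambda\,\id_V)$, which is nonzero by construction. Because $T - \lambda\,\id_V$ commutes with every $\rho(g)$, the subspace $W$ is $H_R$-stable. I would then check that $\rho|_W$ has the same nontrivial central character $\chi$, so that $W$ is a nonzero centrally faithful subrepresentation of $V$. Frobenius indecomposability then forces $W = V$, i.e.\ $T = \lambda\,\id_V$, yielding $\End_{H_R}(V) \subseteq \C\cdot \id_V$; the reverse inclusion is trivial.

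The main point to watch is not really a technical obstacle but a conceptual one: verifying that central faithfulness descends to every nonzero subrepresentation, so that the hypothesis of Frobenius indecomposability genuinely applies to the eigenspace $W$. This is immediate from the definitions, since scalar actions restrict to scalar actions with the same eigenvalues, and it is precisely why the pair \emph{centrally faithful} together with \emph{Frobenius indecomposable} serves as the correct non semisimple analogue of irreducibility: the algebraic machinery of Schur's lemma continues to function without modification, and the rigidity it produces is exactly what will later power the Frobenius Stone von Neumann uniqueness theorem.
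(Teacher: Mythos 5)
Your proof is correct and follows essentially the same route as the paper: extract an eigenvalue of an equivariant endomorphism, observe that the corresponding $H_R$-stable eigenspace inherits the same nontrivial scalar central action, and invoke Frobenius indecomposability to force it to equal $V$. The only difference is that the paper detours through the generalized eigenspace and a separate nilpotent step, whereas your use of the ordinary eigenspace $\ker(T-\lambda\,\id_V)$ reaches $T=\lambda\,\id_V$ directly, a harmless and slightly cleaner streamlining.
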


\begin{proof}
Let $T\in \End_{H_R}(V)$. Since $V$ is finite dimensional over $\C$, the
operator $T$ has at least one eigenvalue $\lambda\in\C$. Consider the
generalised $\lambda$ eigenspace
\[
V_\lambda := \ker\bigl((T-\lambda\id_V)^m\bigr)
\]
for $m\gg 0$.

We first claim that $V_\lambda$ is an $H_R$ subrepresentation. Since $T$
commutes with $\rho(H_R)$, we have
\[
(T-\lambda\id_V)^m \rho(g)=\rho(g)(T-\lambda\id_V)^m
\qquad\text{for all } g\in H_R .
\]
If $v\in V_\lambda$, then $(T-\lambda\id_V)^m v=0$, and hence
$(T-\lambda\id_V)^m(\rho(g)v)=0$. Thus $\rho(g)v\in V_\lambda$.

By construction $V_\lambda$ is nonzero. Since $\rho$ is centrally faithful,
the centre $Z(H_R)$ acts on $V$ by a fixed nontrivial scalar character, and
therefore acts on the invariant subspace $V_\lambda$ through the same
character. Hence $\rho|_{V_\lambda}$ remains centrally faithful.

By Frobenius indecomposability of $\rho$, we must have $V_\lambda=V$.
Consequently $\lambda$ is the only eigenvalue of $T$, and $T$ can be written
as
\[
T=\lambda\id_V + N
\]
with $N$ nilpotent and $H_R$ equivariant.

If $N\neq 0$, then $\ker(N)\neq 0$ is a proper $H_R$ stable subspace. Since
$N$ commutes with the action of $H_R$, the centre still acts on $\ker(N)$ via
the same nontrivial central character. Thus $\ker(N)$ is a proper centrally
faithful subrepresentation of $V$, contradicting Frobenius indecomposability.
Hence $N=0$ and $T=\lambda\id_V$.
\end{proof}

\begin{remark}
In non semisimple categories, nilpotent equivariant endomorphisms are not
excluded by classical irreducibility. In the Frobenius setting they are ruled
out because any nonzero nilpotent equivariant endomorphism produces a proper
centrally faithful subrepresentation, violating Frobenius indecomposability.

As a consequence, rigidity of equivariant endomorphisms ensures that
intertwiners between centrally faithful Frobenius indecomposable
representations are unique up to scalars once mutual nonzero intertwiners
exist. This is the mechanism replacing classical Schur's lemma in the
Frobenius Stone von Neumann theorem.
\end{remark}

\subsection{Intertwiners}

The rigidity statement of the Stone von Neumann theorem is ultimately a
statement about intertwiners. Once the scalar action of the centre is fixed,
any two centrally faithful representations must be related by an
$H_R$ equivariant map, and the existence and rigidity of such maps is what
forces uniqueness up to isomorphism.

In the classical semisimple setting, this mechanism is often hidden behind
irreducibility and Schur's lemma. In the present non semisimple context,
intertwiners must instead be constructed explicitly. The strategy is to
identify a canonical reference representation and then show that any
centrally faithful representation with the same central character admits a
nonzero intertwiner from this model.

This section carries out this strategy. We first realise the Frobenius
Schr\"odinger representation as an induced representation from a maximal
abelian subgroup. We then prove existence of intertwiners using Frobenius
duality and finally apply the rigidity established by the Frobenius Schur
lemma to show that such intertwiners are automatically rigid.

\subsubsection*{The Schr\"odinger representation as an induced module}

We begin by describing the Frobenius Schr\"odinger representation as an induced
representation from a natural abelian subgroup of the Heisenberg group. This
realisation provides a canonical cyclic vector and makes Frobenius
reciprocity available for the construction of intertwiners.

Let
\[
X:=\{(x,0,1):x\in R^n\}, \qquad
Y:=\{(0,y,1):y\in R^n\}, \qquad
Z:=\{(0,0,\lambda):\lambda\in\mu_R\}.
\]
Then $Y$ is abelian and $YZ$ is an abelian subgroup of $H_R$.
Define a character $\psi:YZ\to\C^\times$ by
\[
\psi(0,y,\lambda)=\lambda .
\]

This induced model description follows the classical philosophy of Mackey’s
theory of induced representations \cite{Mackey1958Induced}.

\begin{lemma}\label{lem:sch-induced}
The Frobenius Schr\"odinger representation $\pi$ is canonically isomorphic
to $\Ind_{YZ}^{H_R}\psi$.
\end{lemma}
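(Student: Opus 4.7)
The plan is to exhibit a canonical $H_R$-equivariant linear isomorphism between $\Ind_{YZ}^{H_R}\psi$ and $S_R=\Fun(R^n,\C)$ via evaluation at a transversal for $H_R/YZ$. This is the standard Mackey-style realisation, and its verification reduces to checking that the induced action, translated through the transversal, reproduces the Weyl modulation-translation action defining $\pi$.

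First I would fix a transversal. Using the Heisenberg multiplication law,
\[
(x,0,1)(0,y,\lambda)=(x,y,\lambda),
\]
every element of $H_R$ factors uniquely as a product of an element of $X=\{(x,0,1):x\in R^n\}$ and an element of $YZ$. Hence $X$ is a complete transversal, and the evaluation map
\[
\Phi:\Ind_{YZ}^{H_R}\psi\longrightarrow S_R,\qquad \Phi(F)(x):=F(x,0,1),
\]
is a $\C$-linear bijection: the covariance condition of induction reconstructs $F$ uniquely from its restriction to $X$, and every $f\in S_R$ arises in this way.

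Next I would verify equivariance. For $g=(a,b,\mu)\in H_R$ and $F\in\Ind_{YZ}^{H_R}\psi$, I would evaluate the induced action at $(x,0,1)$ and decompose the resulting argument back into its unique $X\cdot YZ$ factors, using the explicit inversion and multiplication formulae for $H_R$ recorded earlier. The covariance condition with character $\psi$ then extracts a scalar of the form $\mu\,\varepsilon(\beta(b,x))$ multiplied by an evaluation of $F$ at a translated point of $X$. Comparing this expression with
\[
\pi(a,b,\mu)f(x)=\mu\,\varepsilon(\beta(b,x))\,f(x+a)
\]
identifies the two actions on $\Phi(F)$; bijectivity of $\Phi$ then yields the claimed $H_R$-equivariant isomorphism.

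The main obstacle is not conceptual but bookkeeping: aligning the conventions for induction (left versus right cosets, the orientation of the covariance condition, and the placement of $g^{-1}$) with the Schrödinger convention $\pi(x,y,\lambda)=\lambda M_y T_x$, so that the scalar factors $\varepsilon(\beta(\cdot,\cdot))$ and the direction of translation agree on the nose rather than up to a sign or an inverse. Once conventions are fixed consistently, the identification is forced by bilinearity of $\beta$ and the Weyl commutation relation. Canonicity of the isomorphism is automatic: the transversal $X$ is intrinsic to the decomposition $H_R=X\cdot YZ$, and no auxiliary choice enters the construction of $\Phi$.
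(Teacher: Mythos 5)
Your argument is correct in substance, but it takes a genuinely different route from the paper. The paper never writes the induced module as functions on a transversal: it observes that $\delta_0\in S_R$ is a $\psi$-eigenvector for $YZ$, that the translations $T_x$ move $\delta_0$ transitively through the basis $\{\delta_u\}$ while the stabiliser of the line $\C\delta_0$ is exactly $YZ$, and then lets Frobenius reciprocity (equivalently, the universal property of induction) send the standard generator of $\Ind_{YZ}^{H_R}\psi$ to $\delta_0$; surjectivity comes from cyclicity of $\delta_0$ and the identification is completed by the count $[H_R:YZ]=|R^n|=\dim_\C S_R$, which the paper leaves implicit. Your Mackey-style realisation via the unique factorisation $H_R=X\cdot YZ$ buys an explicit intertwiner in coordinates, at the cost of the cocycle bookkeeping you flag; the paper's route buys brevity and avoids all coordinate computation, at the cost of leaving the dimension/surjectivity step tacit.

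One concrete caution about the step you defer. The "bookkeeping" is not entirely cosmetic here, because the paper's stated multiplication law (cocycle $\varepsilon(\beta(y,x'))$) and the formula $\pi(x,y,\lambda)=\lambda M_yT_x$ are transposed relative to one another: the Weyl relation gives $\pi(x,y,\lambda)\pi(x',y',\mu)=\lambda\mu\,\varepsilon(\beta(y',x))M_{y+y'}T_{x+x'}$, so $\pi$ is a homomorphism only for the cocycle $\varepsilon(\beta(y',x))$. If you run your evaluation map with that convention (left covariance $F(hg)=\psi(h)F(g)$, action by right translation, transversal $X$), the computation does come out on the nose, yielding $\mu\,\varepsilon(\beta(b,x))f(x+a)$ exactly as you claim. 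But with the other standard induction conventions, the restriction-to-transversal map intertwines the induced action with a twisted model of the form $\mu\,\varepsilon(\beta(b,x-a))f(x-a)$, and this discrepancy cannot be absorbed by a fibrewise rescaling of $\Phi$; one must either change the convention or insert a further intertwiner. So you should carry out the deferred computation once, with a fixed consistent convention, rather than treat the match as automatic.
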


\begin{proof}
In $S_R=\Fun(R^n,\C)$, the vector $\delta_0$ satisfies
\[
\pi(0,y,\lambda)\delta_0=\lambda\,\delta_0
\qquad\text{for all }(0,y,\lambda)\in YZ,
\]
since $M_y\delta_0=\delta_0$. Thus $\C\delta_0$ affords the character
$\psi$ of $YZ$. Moreover, $X$ acts transitively on the basis
$\{\delta_u:u\in R^n\}$ by translations, and the stabiliser of the line
$\C\delta_0$ is exactly $YZ$. The induced representation
$\Ind_{YZ}^{H_R}\psi$ is therefore generated by a $\psi$ eigenline, and the
map sending the standard induced generator to $\delta_0$ extends to an
$H_R$ equivariant isomorphism
\[
\Ind_{YZ}^{H_R}\psi \cong S_R.
\]
\end{proof}

\subsubsection*{Existence of intertwiners}

We next show that any centrally faithful representation with the same central
character as the Frobenius Schr\"odinger model admits a nonzero intertwiner
from that model. The argument uses semisimplicity of the restriction to a
finite abelian subgroup together with Frobenius reciprocity.

\begin{proposition}\label{prop:intertwiner-exists}
Let $\rho:H_R\to\GL(V)$ be a centrally faithful representation with the same
central character as $\pi$. Then there exists a nonzero intertwiner
\[
T:S_R\to V
\qquad\text{such that}\qquad
T\,\pi(g)=\rho(g)\,T
\quad\text{for all }g\in H_R.
\]
\end{proposition}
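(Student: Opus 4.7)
The plan is to use the explicit induced--representation model of $\pi$ from Lemma~\ref{lem:sch-induced} and apply Frobenius reciprocity. Since $\pi\cong\Ind_{YZ}^{H_R}\psi$ and both $H_R$ and $YZ$ are finite groups, reciprocity gives
\[
\Hom_{H_R}(\pi,\,V)\;\cong\;\Hom_{YZ}\!\bigl(\psi,\,V|_{YZ}\bigr).
\]
A nonzero element of the right-hand side is precisely a vector $v\in V$ with $\rho(h)v=\psi(h)v$ for every $h\in YZ$. Because $\rho$ and $\pi$ share the same central character, the condition on $Z$ reads $\rho(0,0,\lambda)v=\lambda v$ and is automatic; only the condition $\rho(0,y,1)v=v$ for all $y\in R^n$ remains. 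The problem therefore reduces to exhibiting a nonzero $Y$--fixed vector in $V$.

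To produce such a vector, I would decompose $V$ under the restriction to the finite abelian group $Y\cong R^n$. Applying Maschke's theorem to the $\C[Y]$--module $V$ yields
\[
V|_Y\;=\;\bigoplus_{b\in R^n} V_b,
\qquad
V_b:=\bigl\{v\in V:\rho(0,y,1)v=\varepsilon(\beta(b,y))\,v\ \text{for all }y\in R^n\bigr\},
\]
where the labelling of characters by $b\in R^n$ uses Frobenius duality (nondegeneracy of $\beta$ in its first argument). Next I would analyse the action of the subgroup $X=\{(x,0,1)\}$ on these isotypic components. A direct computation with the $H_R$ multiplication law gives $(0,y,1)(x,0,1)=(0,0,\varepsilon(\beta(y,x)))\,(x,0,1)(0,y,1)$, which after applying $\rho$ becomes the Weyl relation
\[
\rho(0,y,1)\,\rho(x,0,1)\;=\;\varepsilon(\beta(y,x))\,\rho(x,0,1)\,\rho(0,y,1).
\]
Hence $\rho(x,0,1)$ carries $V_b$ into $V_{b+L(x)}$, where $L:R^n\to R^n$ is the unique linear map determined by the duality condition $\varepsilon(\beta(L(x),y))=\varepsilon(\beta(y,x))$ for all $y$. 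Since $\beta$ is nondegenerate in both arguments, $L$ is a bijection, and since $\rho(x,0,1)$ is invertible, $X$ permutes the isotypic components transitively on labels. As $V\neq 0$ forces some $V_b\neq 0$, transitivity yields $V_0=V^Y\neq 0$, producing the required $\psi$--eigenvector, and Frobenius reciprocity converts it into a nonzero intertwiner $T:S_R\to V$.

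The hard step is the transitivity claim, and this is exactly where the Frobenius hypothesis is essential: nondegeneracy of $\beta$ in \emph{both} arguments is what guarantees that the auxiliary map $L$ is a bijection and that every $Y$--isotypic label is reachable from every other by an element of $X$. If $\beta$ were only one--sided nondegenerate, the $X$--orbits on the character set of $Y$ would fragment, and the argument would not produce a vector in $V_0$ from an arbitrary centrally faithful $V$. This is the point at which Frobenius duality replaces Pontryagin duality in the standard Stone--von Neumann argument, in line with the paper's thesis that Frobenius--ness is the sharp structural boundary governing the rigidity phenomenon.
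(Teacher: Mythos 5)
Your proposal is correct and follows essentially the same route as the paper: decompose $V$ under the finite abelian subgroup $Y$, use the Weyl commutation with $X$ (together with two-sided nondegeneracy of $\beta$ and Frobenius duality) to move any nonzero isotypic component to the trivial character, obtain a $\psi$-eigenvector for $YZ$, and convert it into a nonzero intertwiner via Lemma~\ref{lem:sch-induced} and Frobenius reciprocity. Your explicit bijection $L$ in fact spells out the transitivity step that the paper only sketches, which is a welcome addition rather than a deviation.
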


\begin{proof}
Since $Y$ is finite abelian, the restriction $\rho|_Y$ is semisimple. Hence
\[
V=\bigoplus_{\varphi\in\widehat{Y}} V_\varphi ,
\]
where $V_\varphi$ denotes the $\varphi$-isotypic component.

Conjugation by elements of
\[
X \cong R^n
\]
permutes the characters of $Y$ via the bicharacter $\chi_\beta$, and hence
permutes the isotypic components $V_\varphi$. Because $\rho$ has the same
central character as $\pi$, this conjugation action contains the trivial
character of $Y$ in its orbit. Consequently, there exists a nonzero vector
$v\in V$ fixed by $Y$.

Since $\rho$ has the same central character as $\pi$, the centre $Z(H_R)$ acts
on $v$ by $\lambda \mapsto \lambda$. Thus $v$ affords the character $\psi$ of
the subgroup $YZ$. Equivalently,
\[
\Hom_{YZ}(\psi,\rho)\neq 0 .
\]

By Lemma~\ref{lem:sch-induced} and Frobenius reciprocity, we obtain
\[
\Hom_{H_R}(\pi,\rho)
\cong
\Hom_{YZ}(\psi,\rho)\neq 0 .
\]
Any nonzero element of this Hom space yields the desired intertwiner
$T:S_R\to V$.
\end{proof}

\subsubsection*{Rigidity of intertwiners}

Finally, we apply the rigidity established in the Frobenius Schur lemma to
intertwiners. Once central faithfulness and Frobenius indecomposability are
imposed, any nonzero intertwiner is automatically an isomorphism. This
completes the uniqueness mechanism underlying the Frobenius Stone von Neumann
theorem.

\begin{proposition}\label{prop:intertwiner-iso}
Let $\rho:H_R\to\GL(V)$ be centrally faithful and Frobenius indecomposable,
with the same central character as $\pi$.
Then any nonzero intertwiner
\[
T:S_R\to V
\]
is an isomorphism.
\end{proposition}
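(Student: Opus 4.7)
The plan is to prove bijectivity of $T$ in two symmetric steps, applying Frobenius indecomposability on the source side to obtain injectivity and on the target side to obtain surjectivity. The mechanism that drives the argument is the observation that an $H_R$-invariant subspace of a centrally faithful representation automatically inherits the central character, so any nonzero such subspace is itself centrally faithful and therefore an admissible competitor for Frobenius indecomposability.

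First I would verify that $\ker T$ is a subrepresentation of $(S_R,\pi)$: if $v\in\ker T$, then $T(\pi(g)v)=\rho(g)T(v)=0$, so $\pi(g)v\in\ker T$. Since $T\neq 0$, we have $\ker T\subsetneq S_R$. If $\ker T$ were nonzero, then the centre $Z(H_R)$, which acts on $S_R$ by the fixed nontrivial scalar character $\chi_\pi$, would act by the same character on $\ker T$, producing a proper nonzero centrally faithful subrepresentation of $\pi$. This contradicts the Frobenius indecomposability of $\pi$ established in the preceding proposition, so $\ker T=0$.

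Next I would run the dual argument on $\operatorname{im} T\subseteq V$. This subspace is $\rho(H_R)$-stable by equivariance of $T$ and nonzero because $T\neq 0$. The identity $\rho(z)T(v)=T(\pi(z)v)=\chi_\pi(z)T(v)$ for $z\in Z(H_R)$ shows that the centre acts on $\operatorname{im} T$ through the same nontrivial character, so $\operatorname{im} T$ is a nonzero centrally faithful subrepresentation of $V$. The hypothesis that $\rho$ is Frobenius indecomposable then forces $\operatorname{im} T=V$, and combined with injectivity this yields the required isomorphism.

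I do not anticipate a genuine obstacle: the whole argument is an immediate consequence of the definition of Frobenius indecomposability once one notices that kernels and images of equivariant maps inherit the central character. The Frobenius Schur lemma of the previous subsection is not used here; it would become relevant only for the separate assertion that such an intertwiner is unique up to scalar, which is not part of the bijectivity statement being proved.
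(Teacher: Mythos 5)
Your proof is correct, but it takes a genuinely different and more elementary route than the paper's. The paper argues by producing a reverse intertwiner $U:V\to S_R$ (Proposition~\ref{prop:intertwiner-exists} with the roles of $\pi$ and $\rho$ interchanged) and then applying the Frobenius--Schur lemma (Theorem~\ref{thm:frobenius-schur}) twice, to $UT\in\End_{H_R}(S_R)$ and $TU\in\End_{H_R}(V)$, concluding that both composites are nonzero scalars and hence that $T$ is injective and surjective. You instead apply the definition of Frobenius indecomposability directly to $\ker T\subseteq S_R$ and $\operatorname{im}T\subseteq V$, using the observation that any nonzero invariant subspace of a centrally faithful representation inherits the same nontrivial central character. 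This avoids two points the paper's proof glosses over: that Proposition~\ref{prop:intertwiner-exists} genuinely applies with the roles reversed (its proof uses the induced-module realisation of $\pi$, so the reverse direction needs the other adjunction in Frobenius reciprocity), and that the scalars $c,d$ are nonzero rather than merely elements of $\C$ (a priori $UT$ or $TU$ could vanish). What the paper's route buys in exchange is the Schur-type rigidity that intertwiners are unique up to scalar, which, as you correctly note, is not needed for the bijectivity statement. One caveat applies to both arguments equally: your injectivity step, like the paper's application of Theorem~\ref{thm:frobenius-schur} to $UT$, requires that $\pi$ itself be Frobenius indecomposable, which is not among the stated hypotheses; you cite the preceding proposition for this, and the paper invokes it in the same way, so your reliance on it matches the paper's and is not a gap specific to your argument.
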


\begin{proof}
By Proposition~\ref{prop:intertwiner-exists}, applied with the roles of
$\pi$ and $\rho$ interchanged, there exists a nonzero intertwiner
$U:V\to S_R$. Then $UT\in\End_{H_R}(S_R)$. Since $\pi$ is centrally faithful and
Frobenius indecomposable, the Frobenius Schur lemma implies
\[
UT=c\,\id_{S_R}
\quad\text{for some }c\in\C^\times.
\]
Hence $T$ is injective. Similarly, $TU\in\End_{H_R}(V)$, and since $\rho$ is
also centrally faithful and Frobenius indecomposable, the Frobenius Schur
lemma gives
\[
TU=d\,\id_V
\quad\text{for some }d\in\C^\times.
\]
Thus $T$ is surjective. Therefore $T$ is an isomorphism.
\end{proof}

\subsection{Stone von Neumann as Boundary Rigidity}

We are now in a position to formulate the main rigidity statement of the
paper: a Stone von Neumann type uniqueness theorem for Heisenberg groups
associated with finite Frobenius rings. All of the necessary structural
ingredients are now in place: the Frobenius Heisenberg group, the
Schr\"odinger model, centrally faithful representations, and the rigidity of
intertwiners.

As in the classical theory, representations of the Heisenberg group are
classified by the scalar action of the centre. Fix a nontrivial central
character $\omega$ of $Z(H_R)$. Since any two nontrivial central characters
differ only by rescaling of the central coordinate, no generality is lost by
choosing the identity character
\[
\omega(\lambda)=\lambda .
\]
With this normalisation, the theorem asserts that once the central action is
fixed, the representation theory of $H_R$ admits essentially no freedom.

\begin{theorem}
Let $R$ be a finite commutative Frobenius ring, and let
\[
\beta:R^n\times R^n\to R
\]
be a nondegenerate bilinear pairing. Let $H_R$ denote the associated
Frobenius Heisenberg group. Let
\[
\omega : \mu_R \longrightarrow \C^\times
\]
be the identity character of the centre. Then the following hold:
\begin{enumerate}
  \item There exists a centrally faithful representation $\rho$ of $H_R$
  which is Frobenius indecomposable with respect to the central character
  $\omega$, whose restriction to the centre
  $Z(H_R) \cong \mu_R$ is $\omega$.

  \item Any two such centrally faithful representations which are Frobenius
  indecomposable with respect to the central character $\omega$ are
  isomorphic.

  \item Up to isomorphism, this unique representation is precisely the
  Schr\"odinger representation on the space
  \[
    S_R = \Fun(R^n,\C).
  \]
\end{enumerate}
\end{theorem}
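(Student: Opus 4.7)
The plan is to deduce each of the three conclusions directly from the structural results already assembled in Sections 6.1--6.4, with the Frobenius Schrödinger representation $\pi$ on $S_R = \Fun(R^n,\C)$ playing the role of the canonical witness throughout. Since every ingredient needed has been proven, the argument is largely an assembly rather than a new computation.

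For part (1), I take $\rho := \pi$. Proposition~\ref{prop:schrodinger-representation} establishes that $\pi$ is a genuine group homomorphism $H_R\to\GL(S_R)$, Proposition~\ref{prop:schrodinger-central-faithful} identifies the central character of $\pi$ with $\omega(\lambda)=\lambda$, and the centrally-faithful APT-indecomposability result proved earlier in Section~6.2 upgrades this to Frobenius indecomposability with respect to $\omega$. No further work is required for existence, and the Schrödinger model already identifies the representation in part~(3) once uniqueness is established.

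For part~(2), let $\rho:H_R\to\GL(V)$ be any centrally faithful, Frobenius indecomposable representation whose restriction to $Z(H_R)\cong\mu_R$ equals $\omega$. Since $\rho$ and $\pi$ now share the identical central character, Proposition~\ref{prop:intertwiner-exists} produces a nonzero intertwiner $T:S_R\to V$, and Proposition~\ref{prop:intertwiner-iso} then forces $T$ to be an $H_R$-equivariant isomorphism. Applying this to any two such representations $\rho_1,\rho_2$ yields isomorphisms $S_R\xrightarrow{\sim}V_1$ and $S_R\xrightarrow{\sim}V_2$, whose composition gives the desired equivariant isomorphism $V_1\cong V_2$. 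This simultaneously delivers part~(3): the unique isomorphism class is represented by $\pi$ on $S_R=\Fun(R^n,\C)$.

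The only delicate point in this assembly is the normalisation of central characters: Propositions~\ref{prop:intertwiner-exists} and \ref{prop:intertwiner-iso} require the two representations being compared to realise the \emph{same} scalar action of $\mu_R$, not merely a nontrivial one. This is handled at the outset by fixing $\omega$ to be the identity character, so that $\omega$ matches the central character of $\pi$ verbatim. A secondary implicit assumption is finite-dimensionality of $V$, which is needed both in the Frobenius Schur lemma (eigenvalues exist) and in the isotypic decomposition of $\rho|_Y$ used within the intertwiner existence argument; finiteness of $R$ ensures this for $S_R$, and standard finite-dimensional representation-theoretic conventions cover $V$. Beyond these bookkeeping remarks, no real obstacle arises: the genuine difficulty of the theorem was absorbed into the Frobenius Schur lemma (Theorem~\ref{thm:frobenius-schur}) and into the Frobenius-duality step producing a $Y$-fixed vector in Proposition~\ref{prop:intertwiner-exists}, which is precisely where the Frobenius hypothesis on $R$ is consumed.
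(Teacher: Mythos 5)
Your uniqueness argument (parts (2) and (3)) follows essentially the same route as the paper: the paper re-derives the existence of a nonzero intertwiner by restricting to the normal abelian subgroup $R^n\times\{0\}\times\{1\}$, moving within a character orbit via Frobenius duality, and inducing back (explicitly ``as in the proof of Proposition~\ref{prop:intertwiner-exists}''), and then upgrades the intertwiner by Frobenius--Schur rigidity; citing Propositions~\ref{prop:intertwiner-exists} and~\ref{prop:intertwiner-iso} directly is the same mechanism, just without re-deriving it. Your remark on matching the central character $\omega$ with that of $\pi$ is also the correct bookkeeping.

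The genuine gap is in your part (1). You set $\rho:=\pi$ and claim that the APT-indecomposability proposition of Section~6.2 ``upgrades'' to Frobenius indecomposability of $\pi$. It does not: that proposition's conclusion concerns only \emph{filtration-compatible} subrepresentations, and its argument runs in the opposite direction --- it \emph{assumes} that $\pi$ admits no proper nonzero centrally faithful subrepresentation and deduces the filtration-compatible statement from it. Nowhere does the paper prove Frobenius indecomposability of the full space $S_R$ directly, and this is exactly why its proof of part (1) does not take $\rho=\pi$: instead it invokes Lemma~\ref{lem:existence-frob-indecomp} to extract a minimal-dimension centrally faithful subrepresentation $W\subseteq S_R$, which is Frobenius indecomposable by minimality. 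As written, your existence step therefore rests on an unestablished (and mis-cited) claim. The fix is to follow the paper: take $W$ from Lemma~\ref{lem:existence-frob-indecomp} for existence, then apply your own part (2) machinery to $W$ to obtain an isomorphism $S_R\xrightarrow{\ \sim\ }W$; this retroactively shows that $S_R$ itself is Frobenius indecomposable and recovers part (3) with the Schr\"odinger model as the unique atomic object.
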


\begin{proof}
Write $H_R=R^n\times R^n\times \mu_R$ with centre
\[
Z=\{(0,0,\lambda):\lambda\in\mu_R\}\cong\mu_R .
\]
Fix the identity central character $\omega(\lambda)=\lambda$.

\medskip
(1) The Frobenius Schr\"odinger representation
$\pi:H_R\to\GL(S_R)$ satisfies 
\[
\pi(0,0,\lambda)=\lambda M_0 T_0=\lambda\,\id_{S_R}, 
\]
and hence has central character $\omega$. In particular, $\pi$ is centrally
faithful.
By Lemma~\ref{lem:existence-frob-indecomp}, $S_R$ contains a nonzero
$H_R$-subrepresentation $0\neq W\subseteq S_R$ which is centrally faithful and
Frobenius indecomposable with respect to the fixed central character
$\omega$.

\medskip
(2) Let $\rho:H_R\to\GL(V)$ be a centrally faithful representation which is
Frobenius indecomposable with respect to the fixed central character
$\omega$.
The strategy is to compare $\rho$ with the Schr\"odinger model by restricting
to a large abelian subgroup and then inducing back to $H_R$.

We first isolate the subgroup
\[
A := R^n\times\{0\}\times\{1\},
\]
which is finite, abelian, and normal in $H_R$.
Since $\C[A]$ is semisimple, the restriction $\rho|_A$ decomposes as a direct
sum of one dimensional characters.
Choosing one such character $\psi$ gives a concrete handle on the structure
of $V$.
The centre of $H_R$ intersects $A$ only in the identity, so there is no
compatibility obstruction coming from the central character at this stage.

Next, consider the subgroup
\[
Y := \{0\}\times R^n\times\{1\}.
\]
For $y\in R^n$ and $x\in R^n$ one computes
\[
(0,y,1)(x,0,1)(0,-y,1)=(x,0,\varepsilon(\beta(y,x))).
\]
Thus conjugation by $Y$ twists characters of $A$ by the bicharacters
$x\mapsto\varepsilon(\beta(y,x))$.
By nondegeneracy of $\beta$ and Frobenius duality, this action is transitive
on the relevant character orbit.
Consequently, $\rho|_A$ contains a character lying in the same $H_R$ orbit as
the character defining the Schr\"odinger representation.

Fix such a character $\psi_0$.
By Frobenius reciprocity, as in the proof of
Proposition~\ref{prop:intertwiner-exists}, the presence of $\psi_0$ in
$\rho|_A$ implies the existence of a nonzero $H_R$ equivariant map
\[
\Ind_A^{H_R}\psi_0 \longrightarrow V.
\]
The induced representation $\Ind_A^{H_R}\psi_0$ is canonically isomorphic to
the Schr\"odinger representation, by the induced model description of
$\pi$ established earlier in Lemma~\ref{lem:sch-induced}.
Hence there exists a nonzero intertwiner
\[
T:S_R\longrightarrow V. 
\]

Finally, both $S_R$ and $V$ are centrally faithful and Frobenius
indecomposable with respect to the same central character $\omega$.
By the Frobenius Schur lemma
(Theorem~\ref{thm:frobenius-schur}), any nonzero intertwiner between such
representations is an isomorphism.
Therefore $V$ is isomorphic to the Schr\"odinger representation,
establishing uniqueness.

\medskip
(3) By construction, the Frobenius Schr\"odinger representation $S_R$ 
is centrally faithful with central character $\omega$.
By Lemma~\ref{lem:existence-frob-indecomp}, it contains a nonzero centrally
faithful Frobenius indecomposable subrepresentation with respect to the
central character $\omega$.

By part~(2), any centrally faithful representation which is Frobenius
indecomposable with respect to the central character $\omega$ is unique up
to isomorphism. Consequently, this indecomposable subrepresentation of
$S_R$ is isomorphic to any other such representation.

In particular, up to isomorphism, the unique centrally faithful Frobenius
indecomposable representation with central character $\omega$ is realised
by the Schr\"odinger model.
\end{proof}

This result is a direct algebraic analogue of the classical Stone von
Neumann theorem over $\R$ and over finite fields. It shows that Frobenius
duality alone is sufficient to recover full Heisenberg rigidity, without
any topology, measure theory, or semisimplicity.

\begin{remark}
The preceding theorem identifies the Frobenius Heisenberg phase as a
boundary rigid algebraic phase in the sense of Algebraic Phase Theory.
Once the central character is fixed, every representation of $H_R$
exhibits a dichotomy forced by the canonical filtration.

Either a representation splits across a boundary stratum of the
defect induced filtration, in which case it decomposes into nontrivial
extension data and fails to be atomic, or it does not split across any
boundary stratum, in which case it is Frobenius indecomposable with respect
to the central character and necessarily isomorphic to the Schr\"odinger
representation.

Thus the Stone von Neumann theorem is reinterpreted as a statement about
boundary indecomposability. The Schr\"odinger model is the unique centrally
faithful representation that cannot be decomposed along the canonical
filtration. All other centrally faithful representations arise by splitting
across defect boundaries and therefore lie beyond the rigid core of the
phase.
\end{remark}

\medskip
\noindent
\textbf{Splitting versus extension.}
Fix a nontrivial central character $\omega$ of $Z(H_R)$. The theorem
identifies a unique centrally faithful representation which is Frobenius
indecomposable with respect to this central character.
Any other centrally faithful representation with central character $\omega$
must therefore be constructed from this unique atomic object by forming
direct sums and nontrivial self-extensions.

In particular, a genuine splitting in this regime is exceptional. Such a
splitting would be equivalent to the existence of a nontrivial
$H_R$ equivariant projection onto a centrally faithful summand. The
Frobenius Schur lemma excludes this possibility in the Frobenius
indecomposable case, showing that boundary layers in the Frobenius
Heisenberg phase typically glue rather than split.

\section{Conclusion}

This paper develops Algebraic Phase Theory as a framework in which rigidity,
defect, and canonical filtration arise intrinsically from purely algebraic data.
Within this framework, rigidity is not imposed through analytic or topological
structures, nor through semisimplicity assumptions.  Instead it emerges from
coherent phase interaction and from the finite termination of the defect
filtration.

For Heisenberg groups associated with finite Frobenius rings, the axioms of
Algebraic Phase Theory recover a full Stone von Neumann type rigidity theorem.
Once the scalar action of the centre is fixed, centrally faithful
representations with fixed central character admit a unique Frobenius
indecomposable object, and every such object is isomorphic to the Frobenius
Schrödinger representation.  The proof is purely algebraic.  It relies only on
Frobenius duality, the Weyl commutation relation, and the boundary calculus
encoded by the canonical filtration.  No semisimplicity, unitarity, or analytic
input is required.

The theory also identifies the precise algebraic boundary at which rigidity
breaks down.  Outside the Frobenius class, generating characters need not exist,
defect cannot be absorbed into a finite filtration, and the canonical hierarchy
of boundary strata fails to terminate.  In this regime the central extension no
longer enforces intertwiner rigidity, Frobenius indecomposable objects do not
exist, and the uniqueness mechanism underlying the Stone von Neumann theorem
collapses.  Rigidity therefore holds exactly up to a sharply defined threshold
determined by Frobenius duality.

Algebraic Phase Theory thus provides both a conceptual explanation and a
structural criterion for Stone von Neumann rigidity.  It characterises when
uniqueness results hold, why they hold, and why they must fail once the
defect boundary is crossed.  In particular, it shows that the Frobenius
Heisenberg phase forms a boundary rigid region inside the wider space of
algebraic phases, and that the Schrödinger representation is its unique
centrally faithful atomic object.

\bibliographystyle{amsplain}
\bibliography{references}

\end{document}